\newtheorem{df}{Definition}[section]
\newtheorem{thm}[df]{Theorem}
\newtheorem{rema}[df] {Remark}
\newtheorem{lem}[df] {Lemma}
\def\sfstp{{\hskip-1em}{\bf.}{\hskip.5em}}
\begin{document}

\pagestyle{myheadings} \markboth{E. Boasso, V. Rako\v cevi\'c }%\hskip 4truecm}
{\it E. Boasso, V. Rako\v cevi\' c}
\title{\bf  Characterizations of EP and normal Banach algebra elements
and Banach space operators }
 \author{ Enrico Boasso, Vladimir  Rako\v cevi\' c}
%\\ \normalsize
%$\hbox{ }^{a }$ Via Cristoforo Cancellieri 2, 34137 Trieste - TS, Italy\\
%$\hbox{ }^b$ University of Nis - Faculty of Sciences and Mathematics \\
 %Visegradska 33 - 18000, Nis - Serbia}
\date{   }
\maketitle

%\subject{enrico\_odisseo@yahoo.it (E. Boasso), vrakoc@bankerinter.net (V. Rako\v cevi\' c).}
\setlength{\baselineskip}{14pt}
\begin{abstract} \noindent Several characterizations of EP and normal Moore-Penrose invertible Banach algebra elements 
will be considered. The Banach space operator case will be also studied.
The results of the present article will extend 
well known facts obtained in the frames of matrices and Hilbert space operators.\par
\vskip.2truecm
\noindent  \it Keywords: \rm EP element, normal element, Moore-Penrose inverse,
group inverse, Banach algebra\par
\vskip.2truecm
\noindent \it AMS classification: 15A09; 47A05
\end{abstract}

\section {\sfstp Introduction}
\
\indent In \cite{MD,MD0} EP and normal Moore-Penrose invertible elements were studied in the frame 
of rings with involution focusing on the pure algebraic structure of the objects under consideration.
In addition, these works extended several well known results obtained for matrices,  \cite{CT,BG}, 
and for Hilbert space operators,  \cite{DK,D}. The objective of the present article
is to characterize both EP and normal Moore-Penrose invertible elements in arbitrary
Banach algebras and EP and normal Moore-Penrose invertible Banach space operators. It is worth 
noticing that although contexts and arguments are different, above all because of the lack of
an involution,
results similar to the ones in the above mentioned papers will be presented.
Moreover,  the proofs of the results of this work give a 
new insight into the cases where an involution does exist (matrices, Hilbert space operators, $C^*$-algebras).
Furthermore,  the results considered also apply to EP and normal matrices
defined using an abitrary norm on a finite dimensional vector space, which extends and generalizes the results
known for EP and normal matrices defined using the conjugate transpose of a matrix.
\par

\indent From now on, $X$ will denote a Banach space and $L(X)$ the
Banach algebra of all bounded and linear maps defined on and with values in $X$.
In addition, if $T\in L(X)$, then $N(T)$ and $R(T)$ will stand for the null space and the
range of $T$ respectively. Note also that $I\in L(X)$ will denote the identity operator on $X$. \par

\indent Recall that the \it descent \rm and the \it ascent \rm of $T\in L(X)$ are
$d(T) =\hbox{ inf}\{ n\ge 0\colon  R(T^n)=R(T^{n+1})\}$ and
$ a(T)=\hbox{ inf}\{ n\ge 0\colon  N(T^n)=N(T^{n+1})\}$
respectively, where if some of the above sets is empty, its infimum is then defined as $\infty$, see for example \cite{T}.
In particular, note that if $a(T)$ and $d(T)$ are finite, then they coincide, see \cite[Theorem 3.6]{T}.\par
 \par
 
\indent On the other hand, $A$ will denote a unital Banach algebra
and $e\in A$ will stand for the identity of $A$. 
If $a\in A$, then $L_a \colon A\to A$
and $R_a\colon A\to A$ will denote the map defined by
left and right multiplication respectively:
$$
L_a(x)=ax, \hskip2truecm R_a(x)=xa,
$$   
where $x\in A$. Note that given $a$, $b\in A$,
$L_{ab}=L_aL_b$ and that $L_a=L_b$ implies that $a=b$.
Similarly, $R_{ab}=R_bR_a$ and if $R_a=R_b$, then $a=b$.
Moreover, the following notation will be used:
$$
N(L_a)= a^{-1}(0),\hskip.2truecm R(L_a)=aA,\hskip.2truecm
N(R_a)= a_{-1}(0),\hskip.2truecm R(R_a)=Aa.
$$
\indent Recall that an element $a\in A$ is called \it{regular}, \rm 
if it has a \it{generalized inverse}, \rm namely if there exists $b\in A$ such that
$$
a=aba.
$$

\indent Furthermore, a generalized inverse $b$ of a regular
element $a\in A$ will be called \it{normalized}, \rm if $b$ is regular
and $a$ is a generalized inverse of $b$, equivalently,
$$
a=aba, \hskip2truecm b=bab.
$$

\indent Note that if $b$ is a generalized inverse of $a$,
then $c=bab$ is a normalized generalized inverse
of $a$.\par

\indent Next follows the key notion in the definition of the 
Moore-Penrose inverse in context of Banach algebras.\par

\begin{df}\label{df1}Given a unital Banach algebra $A$, an element $a\in A$ will be said to be hermitian,
if $\parallel exp(ita)\parallel =1$ for all $ t\in\Bbb R$.
\end{df}

\indent As regard equivalent definitions and the main properties of hermitian 
Banach algebra elements see \cite{V} and \cite[pp. 55, 57, 67, 205]{BD}. Concerning  hermitian Banach space operators, see \cite[Chapter 4]{Dw}. 
Note that an element of a  $C^*$-algebra is 
hermitian if  and only if it is self-adjoint, see \cite[Proposition 20, Chapter I, Section 12]{BD}.\par

\indent In \cite{R1} V. Rako\v cevi\' c introduced the notion of Moore-Penrose invertible
Banach algebra elements. Next the definition of such objects will be recalled.\par

\begin{df}\label{df2} Let $A$ be a unital Banach algebra and consider $a\in A$. If there exists 
$x\in A$ such that $x$ is a normalized generalized inverse of $a$
satisfying that $xa$ and $ax$ are hermitian, then the element $x$ 
will be said to be the Moore-Penrose inverse of $a$, and it will be
denoted by $a^{\dag}$.
\end{df}

\indent In the conditions of Definition \ref{df2}, note that according to \cite[Lemma 2.1]{R1}, there is at most one Moore-Penrose inverse of $a\in A$.
Concerning the notion under consideration, 
see \cite{R1,R2,R3,Bo}. Note that according to  \cite[Proposition 20, Chapter I, Section 12]{BD},
in the frames of matrices with the conjugate transpose, Hilbert space operators and $C^*$-algebras, Definition \ref{df2}
coincides with the usual definition of the Moore-Penrose inverse.
For the original definition of the Moore-Penrose
inverse for matrices, see \cite{Pe}. In the following remark some of the most important properties
of the Moore-Penrose inverse in Banach algebras will be recalled.\par

 \begin{rema}\label{rema6}\rm (i) Let $A$ be a unital Banach algebra and consider $a\in A$.
If $a^{\dag}$ exists, then $a^{\dag}$ is Moore-Penrose invertible. In fact, according to 
Definition \ref{df2}, $(a^{\dag})^{\dag}=a$.\par
\indent (ii) In the conditions of (i), note that according to 
\cite[Theorem 5(ii)]{Bo}, given $a\in A$ a regular element, necessary and sufficient for $a$ to be Moore-Penrose invertible
is that $L_a\in L(A)$ has a Moore-Penrose inverse. Moreover, in this case $(L_a)^{\dag}=L_{a^{\dag}}$. \par
\indent (iii) Let $A=L(X)$, $X$ a Banach space, and $T\in L(X)$ a Moore-Penrose
invertible operator. Then  $TT^{\dag}\in L(X)$ and $T^{\dag}T\in L(X)$ are idempotents such that
$R(TT^{\dag})=R(T)$, $N(TT^{\dag})=N(T^{\dag})$, $X=R(T)\oplus N(T^{\dag})$, $R(T^{\dag}T)=R(T^{\dag})$,
$N(T^{\dag}T)=N(T)$ and $X=R(T^{\dag})\oplus N(T)$. In fact,
according to Definition \ref{df2}, $T=TT^{\dag}T$ and $T^{\dag}=T^{\dag}TT^{\dag}$.
Consequently, $T^{\dag}T=T^{\dag}TT^{\dag}T$, $TT^{\dag}=TT^{\dag}TT^{\dag}$,
$R(T)\subseteq R( TT^{\dag})$ and $N( TT^{\dag})\subseteq N(T^{\dag})$.
Since $R( TT^{\dag})\subseteq R(T)$ and $N(T^{\dag})\subseteq N( TT^{\dag})$,
$R(TT^{\dag})=R(T)$ and $N(TT^{\dag})=N(T^{\dag})$. In particular, $X=R(T)\oplus N(T^{\dag})$. The remaining identities
can be proved interchanging $T$ with $T^{\dag}$.
\end{rema}
\indent In the following definition the notion of EP Banach algebra element will be recalled, see
\cite{Bo}.\par

\begin{df}\label{d3}Given a unital Banach algebra $A$, an element $a\in A$ will be said to be EP, 
if there exists $a^{\dag}$, and  $aa^{\dag}= a^{\dag}a$. 
\end{df}

\indent As regard the properties of EP Hilbert space operators see \cite{Br,DKP},
of EP $C^*$-algebra elements see  \cite{HM,K,B,DKS}, and of EP Banach space
operators and EP Banach algebra elements see \cite{Bo}. 
In the following remark some of the main results concerning EP Banach algebra elements
will be considered.\par

 \begin{rema}\label{rema7}\rm (i)  Let $A$ be a unital Banach algebra and consider $a\in A$. Note that $a\in A$
is EP if and only if $a^{\dag}$ is EP. \par
\indent (ii) In the conditions of (i), according to \cite[Remark 12]{Bo}, necessary and
sufficient for $a\in A$ to be EP is the fact that $L_a\in L(A)$ is EP. \par
\indent (iii) Let $A=L(X)$, $X$ a Banach space, and consider $T\in L(X)$. Then, according to \cite[Theorem 16]{Bo},
$T$ is EP if and only if $R(T)=R(T^{\dag})$ or $N(T)=N(T^{\dag})$.
\end{rema}
\indent The group inverse is a notion closely related to the one of EP Banach algebra element. 
Since in the following section this notion will be intensively used,
it will be recalled.\par

\begin{df}\label{df4}Given a unital Banach algebra $A$ and $a\in A$, an element $b\in A$ will be said 
to be the group inverse of $a$, if the following set of equations is satisfied:\par
$$
a=aba, \hskip.8cm b=bab, \hskip.8cm ab=ba.
$$
\end{df}
\indent In the conditions of Definition \ref{df4}, note that according to \cite[Theorem 9]{HM1}, if the group inverse of $a\in A$ exists, then
it is unique. In this case, the group inverse of $a\in A$ will be denoted by $a^{\sharp}$.  In the following remark some of the most relevant properties of the group inverse will be
given.\par

\begin{rema}\label{rema5}\rm  (i)  Let $A$ be a unital Banach algebra and consider $a\in A$.
Suppose that $b\in A$ is a normalized generalized inverse of $a$. Then, necessary and sufficient for $b$ to be the group inverse of $a$ is that
$L_b\in L(A)$ (respectively $R_b\in L(A)$) is the group inverse of $L_a\in L(A)$ (respectively $R_a\in L(A)$). In fact, since $L_b$ is a normalized generalized inverse of $L_a$,
according to Definition \ref{df4} the statement under consideration is equivalent to saying that $a$ and $b$ commute 
if and only if $L_a$ and $L_b$ commute, which is clear. A similar argument proves the statement for the right multiplication operator on $L(A)$.
In addition, note that in this case, according to  \cite[Theorem 9]{HM1}, $(L_a)^{\sharp} =L_{a^\sharp}$ (respectively $(R_a)^{\sharp} =R_{a^\sharp}$).\par

\indent (ii) In the conditions of (i), note that if $a\in A$ is group invertible,
then necessary and sufficient for $a$ to be EP is that $a a^{\sharp}=a^{\sharp}a$
is a hermitian element. In fact, if $a\in A$ is EP, then according to 
\cite[Theorem 9]{HM1}, $a^{\sharp}$ exists, actually $a^{\sharp}=  a^{\dag}$.
In particular, $a a^{\sharp}=a^{\sharp}a=a^{\dag}a$ is hermitian.
On the other hand, if $a^{\sharp}$ exists and  $aa^{\sharp}=a^{\sharp}a$
is hermitian, then according to Definition \ref{df2}, $a^{\dag}$ exists.
What is more, according to  \cite[Lemma 2.1]{R1}, 
$a^{\dag}=a^{\sharp}$. Since
$a a^{\dag}=aa^{\sharp}= a^{\sharp}a=a^{\dag}a$,
$a$ is EP.\par

\indent (iii) Let $A=L(X)$, $X$ a Banach space, and consider $T\in L(X)$. Then, according to \cite[Lemma 1 and Theorem 4]{Ki}, 
the following statements are equivalent:
$$
(1) \hskip.3truecm T^{\sharp} \hbox{ exists}, \hskip.3truecm (2)\hskip.3truecm X=N(T)\oplus R(T),\hskip.3truecm (3)\hskip.3truecma(T)\le 1 \hbox{ and } d(T)\le 1.
$$ 
\indent (iv) In the conditions of (iii), note that if $T\in L(X)$ is group invertible, 
then an argument similar to the one in Remark \ref{rema6}(iii)
proves that $N(T)=N(T^{\sharp}T)=N(TT^{\sharp})=N(T^{\sharp})$ and $R(T)=R(TT^{\sharp})=R(T^{\sharp}T)=R(T^{\sharp})$.
 \end{rema}
\indent On the other hand, to prove the characterizations of Section 3, normal Banach algebra
elements need to be considered.\par

\indent Let $A$ be a unital Banach algebra and denote by $\mathcal{H}(A)$ the set of all
hermitian elements of $A$. Set $\mathcal{V}(A)=\mathcal{H}(A)+i\mathcal{H}(A)$.
Recall that according to \cite[Hilfssatz 2(c)]{V}, for each $a\in \mathcal{V}(A)$ there exist
necessary unique hermitian elements $u$, $v\in \mathcal{H}(A)$ such that $a=u+iv$. As a result,
 the operation $a^*=u-iv$ is well defined. Note that $^*\colon \mathcal{V}(A)\to \mathcal{V}(A)$
is not an involution, in particular $(ab)^*$ does not in general coincide with $b^*a^*$,
where $a$, $b\in \mathcal{V}(A)$.  However, if $A=\mathcal{V}(A)$ and for
every $h\in \mathcal{H}(A)$,  $h^2=u+iv$, where $u$, $v\in \mathcal{H}(A)$ and $uv=vu$,
then $A$ is a $C^*$-algebra
whose involution is the just considered operation,
see \cite{Be,G,V}. Next follows the definition of normal Banach algebra element. \par

\begin{df}\label{df10}Given a unital Banach algebra $A$, $a=u+iv\in  \mathcal{V}(A)$  will be said to be 
normal if $uv=vu$. 
\end{df}
\indent In the follwing remark several properties of normal Banach algebra elements will be recalled.\par
 
\begin{rema}\label{rema11}\rm (i) Let $A$ be a unital Banach algebra and consider $a\in  \mathcal{V}(A)$.
Note that necessary and sufficient for $a$ to be normal is the fact that $aa^*=a^*a$.\par
\indent (ii) In the conditions of (i), if $a\in A$, then according to the proof of \cite[Theorem 5]{Bo}, necessary and sufficient for $a\in A$
to belong to   $\mathcal{H}(A)$ is that $L_a\in  \mathcal{H}(L(A))$. Therefore, $a\in \mathcal{V}(A)$ is 
normal if and only if $L_a\in  \mathcal{V}(L(A))$
is normal. A similar statement can be proved if $R_a\in L(A)$ instead of $L_a\in L(A)$
is considered.  Note also that if $a\in \mathcal{V}(A)$, then $L_a$, $R_a\in  \mathcal{V}(L(A))$, and $(L_a)^*=L_{a^*}$ and $(R_a)^*=R_{a^*}$.\par
\indent (iii) When $A=L(X)$, $X$ a Banach space, if $T\in L(X)$ is a normal operator,
then according to \cite[Lemma 3]{DGS}, $N(T^*)=N(T)$. In addition, if $R(T)$ is closed, then
according to \cite[Corollary 4]{F}, $T^{\sharp}$ exists. Since $TT^*=T^*T$,
according to Remark  \ref{rema5}(iii), $R(T^*)\subseteq R(T)$.  \par 
\indent (iv) In the conditions of (i), when $a\in A$ is a normal element,
according to what has been recalled, it is not difficult to prove that
$a^{-1}(0)=(a^*)^{-1}(0)$, $a_{-1}(0)=(a^*)_{-1}(0)$,
and if $aA$ (respectively $Aa$) is closed, then
$a^*A\subseteq aA$ (respectively $Aa^*\subseteq Aa$).
\end{rema}

\indent Next a characterization of normal invertible elements will be presented.
The following Theorem presents a new proof of the main result in \cite{S}.\par

\begin{thm}\label{thm12}Let $A$ be a unital Banach algebra and consider $a\in  \mathcal{V}(A)$ a normal
element. Then, if $a$ is one side invertible, $a$ is invertible. 
\end{thm}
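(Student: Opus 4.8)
The plan is to transfer the one-sided invertibility of $a$ to the multiplication operators $L_a,R_a\in L(A)$ and then exploit that, by Remark~\ref{rema11}(ii), these operators are normal precisely because $a$ is. I would split into two cases according to the side on which $a$ is invertible; the two are symmetric under interchanging $L_a$ with $R_a$, so the bulk of the work is one case. Throughout I would deliberately avoid manipulating the operation $^*$ at the element level, since it is not an involution, and instead route everything through the normal operators $L_a$ and $R_a$, where the spectral/range behaviour recorded in Remarks~\ref{rema11} and \ref{rema5} is available.

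Suppose first that $a$ is left invertible, say $ba=e$. Since $L_{ba}=L_bL_a$, this says $L_bL_a=I$, so $L_a$ has a bounded left inverse; in particular $L_a$ is bounded below, whence $N(L_a)=\{0\}$ and $R(L_a)=aA$ is closed. Now $L_a$ is a normal operator on the Banach space $A$ with closed range, so Remark~\ref{rema11}(iii) gives that $(L_a)^{\sharp}$ exists. By the characterization of group invertibility in Remark~\ref{rema5}(iii), this is equivalent to $A=N(L_a)\oplus R(L_a)$; as $N(L_a)=\{0\}$, I conclude $aA=R(L_a)=A$. Thus $a$ is right invertible as well, and left together with right invertibility force $a$ to be invertible.

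For the right-invertible case $ac=e$ I would run the identical argument with $R_a$ in place of $L_a$: from $R_{ac}=R_cR_a$ one gets $R_cR_a=I$, so $R_a$ is bounded below, hence injective with closed range $R(R_a)=Aa$. Since $R_a$ is normal (the right-multiplication version of Remark~\ref{rema11}(ii)) and has closed range, Remark~\ref{rema11}(iii) produces $(R_a)^{\sharp}$, and Remark~\ref{rema5}(iii) combined with $N(R_a)=\{0\}$ forces $Aa=A$. Hence $a$ is left invertible, and therefore invertible.

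The crux of the argument, and the place where the Banach-algebra setting genuinely needs the preliminaries, is the step that upgrades \emph{closed range} to \emph{group invertible} for the normal multiplication operator, i.e.\ Remark~\ref{rema11}(iii); everything else is bookkeeping between one-sided invertibility of $a$ and of $L_a$ (resp.\ $R_a$). I expect the only points requiring care to be the correct tracking of which multiplication operator to use in each case (so that one obtains a \emph{left} inverse of the operator, yielding injectivity and closed range) and the refusal to treat $^*$ as multiplicative; passing to $L_a$ and $R_a$ removes that hazard cleanly.
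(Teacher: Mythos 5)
Your proof is correct and follows essentially the same route as the paper's: both rest on transferring normality of $a$ to the multiplication operator (Remark~\ref{rema11}(ii)), invoking the fact that a normal operator with closed range is group invertible (Remark~\ref{rema11}(iii)), and then using the decomposition $A=N\oplus R$ of Remark~\ref{rema5}(iii) to upgrade one-sided to two-sided invertibility. The only differences are cosmetic: the paper argues by contradiction via the identity $\sigma(a)=\sigma(L_a)$ and treats the right-invertible case with $L_a$ (surjectivity giving closed range), whereas you argue directly, concluding $aA=A$ (resp.\ $Aa=A$) with no spectral input, and route the right-invertible case through $R_a$ so that in both cases the multiplication operator has a bounded left inverse.
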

\begin{proof} Suppose that $a$ is right invertible but not invertible. Then, there exists $b\in A$ such that $ab=e$ but 
$0\in \sigma(a)$. Next consider $L_a\in L(A)$. Then, since according to \cite[Proposition 4(ii), Chapter I, Section 5]{BD} $\sigma(a)=\sigma(L_a)$, $0\in \sigma(L_a)$. 
Moreover,  according to Remark  \ref{rema11}(ii), $L_a\in L(A)$ is normal. Furthermore, since $ab=e$,
$aA=A$ is closed. Therefore, according to Remark \ref{rema11}(iii), $(L_a)^{\sharp}$ exists. However, 
according to Remark \ref{rema5}(iii), $A=a^{-1}(0)\oplus aA$. 
In particular, $a^{-1}(0)=0$ and then $L_a\in L(A)$ is an invertible map, which is impossible for $0\in \sigma (L_a)$.
Therefore, $a$ is invertible.\par

\indent A similar argument proves that it is impossible for $a$ to be left invertible but not
invertible.
\end{proof}

\section {\sfstp EP Banach algebra elements}
\
\indent In this section characterizations  of EP Banach algebra elements will be presented.
However, to this end some preliminary results must be considered.\par

\begin{lem}\label{lemA}Let $X$ be a Banach space and consider $T\in L(X)$
such that $T^{\dag}$ and $T^{\sharp}$ exist. Then the following statements
are equivalent.
\begin{align*} 
&(i)\hskip.2truecm T \hbox{ is EP },& &(ii)\hskip.2truecm R(T)\subseteq R(T^{\dag}),& &(iii)\hskip.2truecm R(T^{\dag})\subseteq R(T),&\\
&(iv)\hskip.2truecm  N(T)\subseteq N(T^{\dag}),& &(v)\hskip.2truecm N(T^{\dag})\subseteq N(T).& & &\\ 
\end{align*} 
\end{lem}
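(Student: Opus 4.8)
The plan is to show that (i) implies each of (ii)--(v), and then that each of (ii)--(v) implies (i), so that all five statements are equivalent.

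For (i) $\Rightarrow$ (ii)--(v), I would invoke Remark \ref{rema5}(ii): since $T$ is EP and group invertible by hypothesis, one has $T^{\dag}=T^{\sharp}$. Then Remark \ref{rema5}(iv) gives $R(T^{\dag})=R(T^{\sharp})=R(T)$ and $N(T^{\dag})=N(T^{\sharp})=N(T)$, so that all four inclusions (ii)--(v) hold, indeed with equality.

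For the converse, the main tool is the family of direct sum decompositions already recorded in the excerpt. From Remark \ref{rema6}(iii) we have $X=R(T)\oplus N(T^{\dag})$ and $X=R(T^{\dag})\oplus N(T)$, while from Remark \ref{rema5}(iii) the existence of $T^{\sharp}$ yields $X=N(T)\oplus R(T)$. Reading these off, $R(T)$ and $R(T^{\dag})$ are both algebraic complements of $N(T)$, and $N(T)$ and $N(T^{\dag})$ are both algebraic complements of $R(T)$. The elementary observation I would then use is that if $X=M_1\oplus N=M_2\oplus N$ and $M_1\subseteq M_2$, then $M_1=M_2$: writing $m_2\in M_2$ as $m_2=m_1+n$ with $m_1\in M_1\subseteq M_2$ and $n\in N$ forces $n=m_2-m_1\in M_2\cap N=0$, so $m_2\in M_1$. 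Applying this, either of the inclusions (ii), (iii) forces $R(T)=R(T^{\dag})$, and either of (iv), (v) forces $N(T)=N(T^{\dag})$; in every case Remark \ref{rema7}(iii) gives that $T$ is EP.

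I do not expect a serious obstacle; the argument is essentially bookkeeping among the three decompositions together with the one-line complement lemma. The only point needing a little care is in the direction (i) $\Rightarrow$ (ii)--(v): Remark \ref{rema7}(iii) only supplies the \emph{disjunction} $R(T)=R(T^{\dag})$ or $N(T)=N(T^{\dag})$, so to obtain all four inclusions simultaneously I route through the identity $T^{\dag}=T^{\sharp}$, which delivers both equalities at once.
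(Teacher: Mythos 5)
Your proposal is correct and takes essentially the same approach as the paper: the converse directions rest on exactly the same three decompositions and the same one-line common-complement argument (which the paper proves inline for case (ii) and declares the remaining cases similar), followed by the citation of Remark \ref{rema7}(iii). Your forward direction, routed through $T^{\dag}=T^{\sharp}$ via Remark \ref{rema5}(ii) and (iv), is a slightly more careful variant of the paper's bare appeal to Remark \ref{rema7}(iii), and it properly handles the disjunction issue you flagged.
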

\begin{proof} According to Remark \ref{rema7}(iii), statement (i) implies statements (ii)-(v).\par
\indent Suppose that statement (ii) holds. Then, according to Remark  \ref{rema6}(iii)
and Remark \ref{rema5}(iii),
$$
X=R(T^{\dag})\oplus N(T)=R(T)\oplus N(T).
$$
Thus $N(T)$ is a common complement of $R(T^{\dag})$ and $R(T)$ and so 
$R(T)\subseteq R(T^{\dag})$ implies that $R(T^{\dag})= R(T)$ (see \cite[p.142]{DY} where
it is mentioned without proof that if $M$, $N$ are subspaces of $X$ and $N$ is a 
proper subspace of $M$, then $M$ and $N$ cannot have a common complement).
To prove the above mentioned implication, let $m\in R(T^{\dag})$. Next consider $n\in N(T)$
and $l\in R(T)$ such that $m=n+l$. Since  $R(T)\subseteq R(T^{\dag})$, $l-m=-n\in R(T^{\dag})\cap N(T)=0$.
In particular, $n=0$ and $m=l\in R(T)$. Therefore, $R(T^{\dag})=R(T)$, and according to Remark
\ref{rema7}(iii), $T$ is EP.\par 
\indent The equivalences among statement (i) and statements (iii)-(v) follow in a similar manner.
\end{proof}

\indent Next results of \cite[Theorem 4.1]{DK} will be extended from Hilbert space
operators to Banach algebra elements, 
see also \cite[Theorems 1, 3 and 4]{BG}, where matrices were considered.
Compare with \cite[Theorem 2.1]{MD0}, where EP elements  in rings with involution were studied.

\begin{thm}\label{thm6}Let $A$ be a unital Banach algebra and
consider $a\in A$ such that $a^{\dag}$ exists. Then $a$ is EP
if and only if  $a^{\sharp}$ exists and one of the following statements holds.\par
\begin{align*} 
(i)\hskip.2cm &aa^{\dag}a^{\sharp}=a^{\dag}a^{\sharp}a,& (ii)\hskip.2cm &aa^{\dag}a^{\sharp}=a^{\sharp}aa^{\dag},\\
(iii)\hskip.2cm &aa^{\sharp}a^{\dag}=a^{\dag}aa^{\sharp},& (iv)\hskip.2cm &aa^{\sharp}a^{\dag}=a^{\sharp}a^{\dag}a,\\
(v)\hskip.2cm &a^{\dag}aa^{\sharp}=a^{\sharp}a^{\dag}a, & (vi)\hskip.2cm &(a^{\dag})^2a^{\sharp}=a^{\dag}a^{\sharp}a^{\dag},\\
(vii)\hskip.2cm &aa^{\sharp}a^{\dag}=a^{\sharp},& (viii)\hskip.2cm & a^{\dag}a^{\sharp}=a^{\sharp}a^{\dag},\\
(ix)\hskip.2cm &a^{\dag}a^{\sharp}a^{\dag}=a^{\sharp}(a^{\dag})^2,& (x)\hskip.2cm & a^{\dag}(a^{\sharp})^2=a^{\sharp}a^{\dag}a^{\sharp},\\
(xi)\hskip.2cm &a^{\dag}(a^{\sharp})^2=(a^{\sharp})^2a^{\dag},& (xii)\hskip.2cm &(a^{\sharp})^2a^{\dag}=a^{\sharp} a^{\dag}a^{\sharp},\\
(xiii)\hskip.2cm &aa^{\sharp}=a^{\dag}a,& (xiv)\hskip.2cm &a^{\dag}a^{\dag}=a^{\dag}a^{\sharp},\\
(xv)\hskip.2cm &a^{\dag}a^{\dag}=a^{\sharp} a^{\dag},& (xvi)\hskip.2cm &(a^{\dag})^2=(a^{\sharp})^2,\\
(xvii)\hskip.2cm &a^{\dag}a^{\sharp}=(a^{\sharp})^2,& (xviii)\hskip.2cm &a(a^{\dag})^2=a^{\sharp},\\
(xix)\hskip.2cm &a^{\sharp}a^{\dag}a=a^{\dag},& (xx)\hskip.2cm &a^{\dag}aa^{\sharp}=a^{\dag}.&  \\
\end{align*}
\end{thm}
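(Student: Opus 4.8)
The plan is to treat the two implications asymmetrically. The forward implication is essentially formal. If $a$ is EP, then by Remark~\ref{rema5}(ii) the group inverse $a^{\sharp}$ exists and in fact $a^{\dag}=a^{\sharp}$; writing $b=a^{\dag}=a^{\sharp}$, the defining relations reduce to $aba=a$, $bab=b$ and $ab=ba$, so $a$ and $b$ commute and each of the twenty equations collapses to a trivial identity in the two commuting elements $a$ and $b$. I would simply substitute $a^{\dag}=a^{\sharp}$ in (i)--(xx) and check them off; no equation requires more than the commutativity $ab=ba$ together with $aba=a$ and $bab=b$ (from which one also gets $ab^{2}=b=b^{2}a$). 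Thus the whole content of the theorem lies in the converse.

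For the converse I assume $a^{\dag}$ and $a^{\sharp}$ both exist and one of (i)--(xx) holds, and I want to conclude that $a$ is EP. I would first pass to the left regular representation $a\mapsto L_{a}\in L(A)$. Since $L_{xy}=L_{x}L_{y}$ and $L_{x}=L_{y}$ forces $x=y$, every equation in (i)--(xx) transfers verbatim to the operators $L_{a}$, $L_{a^{\dag}}$, $L_{a^{\sharp}}$; moreover $(L_{a})^{\dag}=L_{a^{\dag}}$ by Remark~\ref{rema6}(ii), $(L_{a})^{\sharp}=L_{a^{\sharp}}$ by Remark~\ref{rema5}(i), and $a$ is EP if and only if $L_{a}$ is EP by Remark~\ref{rema7}(ii). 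Hence it suffices to prove the statement for an operator $T=L_{a}$ for which both $T^{\dag}$ and $T^{\sharp}$ exist, and there I may invoke Lemma~\ref{lemA}: it is enough to establish any one of the four inclusions $R(T)\subseteq R(T^{\dag})$, $R(T^{\dag})\subseteq R(T)$, $N(T)\subseteq N(T^{\dag})$, $N(T^{\dag})\subseteq N(T)$.

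The mechanism for producing such an inclusion is purely computational. I would keep at hand the ambient identities available once both inverses exist: $TT^{\dag}T=T$, $T^{\dag}TT^{\dag}=T^{\dag}$, $TT^{\sharp}T=T$, $T^{\sharp}TT^{\sharp}=T^{\sharp}$ and $TT^{\sharp}=T^{\sharp}T$, together with $R(T)=R(T^{\sharp})$ and $N(T)=N(T^{\sharp})$ from Remark~\ref{rema5}(iv) and the decompositions $X=R(T)\oplus N(T^{\dag})=R(T^{\dag})\oplus N(T)=N(T)\oplus R(T)$ from Remarks~\ref{rema6}(iii) and~\ref{rema5}(iii). For each equation I would multiply on the left and right by appropriate choices among $T$, $T^{\dag}$, $T^{\sharp}$ and collapse it, using these identities, to a membership statement --- typically $T^{\dag}\in R(T)$, which yields $R(T^{\dag})\subseteq R(T)$, or the dual annihilator containment --- at which point Lemma~\ref{lemA} finishes the argument. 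To avoid twenty independent computations I would exploit that several of the listed relations arise from shorter ones by a single multiplication by $a$ or $a^{\dag}$, and organize the conditions into a few groups (the range type, the null space type, and the remaining mixed ones), funnelling each group into one of the four inclusions.

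The main obstacle is not conceptual but organizational: the reduction to Lemma~\ref{lemA} is uniform, but finding, for each of the twenty equations, the precise sequence of one- and two-sided multiplications that forces one of the four inclusions is delicate, especially for the indirect conditions such as (i)--(vi) and (ix)--(xii), which do not visibly assert any containment. The real work is to arrange the manipulations so that the hermitian idempotents $aa^{\dag}$, $a^{\dag}a$ and the group idempotent $aa^{\sharp}=a^{\sharp}a$ interact to deliver $a^{\dag}\in aA$, $a\in a^{\dag}A$, or the matching null space inclusion, and to bundle the cases so that only a handful of genuinely distinct calculations remain.
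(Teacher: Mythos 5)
Your skeleton coincides with the paper's: the forward implication is immediate once one notes that $a^{\dag}=a^{\sharp}$ for an EP element; the passage to the left regular representation, with $(L_{a})^{\dag}=L_{a^{\dag}}$ (Remark~\ref{rema6}(ii)), $(L_{a})^{\sharp}=L_{a^{\sharp}}$ (Remark~\ref{rema5}(i)) and ``$a$ is EP if and only if $L_{a}$ is EP'' (Remark~\ref{rema7}(ii)), reduces everything to an operator $T$ for which both inverses exist; and Lemma~\ref{lemA} is the right finishing device. But this is where the paper's proof \emph{begins}, not where it ends. The substance of the theorem is the verification that each of the twenty identities forces one of the four inclusions of Lemma~\ref{lemA}, and your proposal does not carry out a single one of them: it offers a heuristic (multiply by suitable factors until the identity ``collapses to a membership statement'') and then concedes that finding the actual manipulations is ``delicate, especially for the indirect conditions such as (i)--(vi) and (ix)--(xii).'' That concession marks the genuine gap: a reduction to Lemma~\ref{lemA} plus an unexecuted plan for twenty computations is not a proof, and nothing in the proposal certifies that the heuristic succeeds in every case.

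What makes the case analysis tractable in the paper is an organizing principle your plan lacks: the twenty conditions are sorted into five families according to the relation they yield --- $R(T)=R(T^{\dag})$ for (i), (viii), (xiii); $R(T^{\dag})\subseteq R(T)$ for (v), (ix)--(xi); $R(T)\subseteq R(T^{\dag})$ for (xv)--(xvii); $N(T)\subseteq N(T^{\dag})$ for (ii), (vi), (xii), (xix), (xx); and $N(T^{\dag})\subseteq N(T)$ for (iii), (iv), (vii), (xiv), (xviii) --- and within each family the verification is a subspace computation resting on $R(T)=R(T^{\sharp})$, $N(T)=N(T^{\sharp})$, $R(TT^{\dag})=R(T)$, $N(T^{\dag}T)=N(T)$ and the decompositions $X=R(T)\oplus N(T^{\dag})=R(T^{\dag})\oplus N(T)=R(T)\oplus N(T)$. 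Two samples show what is needed. For (i): $R(TT^{\dag}T^{\sharp})=TT^{\dag}(R(T^{\sharp}))=TT^{\dag}(R(T))=R(T)$, while $R(T^{\dag}T^{\sharp}T)=T^{\dag}(R(T^{\sharp}T))=T^{\dag}(R(T))=R(T^{\dag})$, so the identity itself reads $R(T)=R(T^{\dag})$. For (ii): if $x\in N(T)=N(T^{\sharp})$, then $T^{\sharp}TT^{\dag}x=TT^{\dag}T^{\sharp}x=0$, so $TT^{\dag}x\in N(T^{\sharp}T)\cap R(T)=N(T)\cap R(T)=0$, hence $x\in N(TT^{\dag})=N(T^{\dag})$ --- an argument exploiting the directness of $N(T)\oplus R(T)$, not a one-step multiplication. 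Your membership version ($a^{\dag}\in aA$ or $a\in a^{\dag}A$ for range inclusions, $a^{\dag}\in Aa$ for null-space inclusions) can in fact be pushed through; for instance, from (ii) one gets $a^{\sharp}aa^{\dag}=aa^{\dag}a^{\sharp}\in Aa^{\sharp}=Aa$, hence $aa^{\dag}=a(a^{\sharp}aa^{\dag})\in Aa$ and $a^{\dag}=a^{\dag}aa^{\dag}\in Aa$. But until a derivation of this kind is exhibited for each of the twenty conditions, the converse half of the theorem remains unproved.
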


\begin{proof}

\indent It is clear that if $a$ is EP, then all the statement hold.\par

\indent On the other hand, to prove the converse implications, first of 
all consider $A=L(X)$, $X$ a Banach space, and let $T\in L(X)$ be
a bounded operator defined on $X$ such that $T^{\dag}$ and $T^{\sharp}$ exist.
The statements of the present Theorem  will be divided in several
cases.\par

\indent First case: $R(T)=R(T^{\dag})$.\par
\indent Suppose that statement (i) holds, that is
$TT^{\dag}T^{\sharp}=T^{\dag}T^{\sharp}T$. In particular, according to Remark \ref{rema5}(iv),
$$
R(TT^{\dag}T^{\sharp})=TT^{\dag}(R(T^{\sharp}))=TT^{\dag}(R(T))=R(TT^{\dag}T)=R(T).
$$
\indent On the other hand, according to Remark \ref{rema5}(iv) and Remark \ref{rema6}(iii),
 $$
R(T^{\dag}T^{\sharp}T)=T^{\dag}(R(T^{\sharp}T))=T^{\dag}(R(T))=R(T^{\dag}T)=R(T^{\dag}).
$$
\indent Therefore, $R(T)=R(T^{\dag})$, and according to Remark \ref{rema7}(iii), $T$ is an 
EP operator.\par
\indent Next suppose that statement (viii) is satisfied, that is $T^{\dag}T^{\sharp}=T^{\sharp}T^{\dag}$.
Consequently, according to Remark \ref{rema5}(iv) and Remark \ref{rema6}(iii),
$$
R(T^{\dag}T^{\sharp})=T^{\dag}(R(T^{\sharp}))=T^{\dag}(R(T))= R(T^{\dag}T)=R(T^{\dag}).
$$
As regard the other range space, according to Remark \ref{rema5}(iii)-(iv) and Remark \ref{rema6}(iii),
$$
R(T^{\sharp}T^{\dag})=T^{\sharp}(R(T^{\dag})\oplus N(T))= R(T^{\sharp})=R(T).
$$
As before, $T$ is EP. Moreover, the equivalence between statement (xiii) and the condition of
being EP can be proved in a similar way.\par

\indent Second case: $R(T^{\dag})\subseteq R(T)$.\par
\indent Suppose that statement (v) is true, that is $T^{\dag}TT^{\sharp}=T^{\sharp}T^{\dag}T$.
Then, according to Remark \ref{rema5}(iv) and Remark \ref{rema6}(iii),
$$
R(T^{\dag}TT^{\sharp})=T^{\dag}(R(TT^{\sharp}))=T^{\dag}(R(T))= R(T^{\dag}),
$$
while $R(T^{\sharp}T^{\dag}T)\subseteq R(T^{\sharp})=R(T)$ (Remark \ref{rema5}(iv)). Therefore, 
$R(T^{\dag})\subseteq R(T)$. However, according to Lemma \ref{lemA}(iii),  $T$ is EP.
The equivalences among statements (ix)-(xi) and the condition of being EP can be proved in a 
similar way.\par

\indent Third case: $R(T)\subseteq R(T^{\dag})$.\par

\indent Suppose that statement (xv) holds, that is $T^{\dag}T^{\dag}=T^{\sharp} T^{\dag}$.
Thus, according to Remark \ref{rema5}(iv) and Remark \ref{rema6}(iii),
$$
R(T^{\sharp} T^{\dag})=T^{\sharp}(R( T^{\dag}))=T^{\sharp}(R( T^{\dag})\oplus N(T))=R(T^{\sharp} )=R(T).
$$
Therefore, $R(T)\subseteq R((T^{\dag})^2)\subseteq  R(T^{\dag})$. Now well,
according to Lemma \ref{lemA}(ii), $T$ is EP.
The equivalences among statements (xvi)-(xvii) and the condition of being EP can be proved in a similar way.\par 
\indent Fourth case: $N(T)\subseteq N(T^{\dag})$.\par

\indent Suppose that statement (ii) holds, that is $TT^{\dag}T^{\sharp}=T^{\sharp}TT^{\dag}$.
Consider $x\in N(T)$. Then, $TT^{\dag}(x)\in N(T^{\sharp})=N(T)$ (Remark \ref{rema5}(iv)). Consequently,
$TT^{\dag}(x)\in N(T)\cap R(T)=0$ (Remark \ref{rema5}(iii)). In particular,  $x\in N(TT^{\dag})=N(T^{\dag})$ (Remark \ref{rema6}(iii)).
Therefore, $N(T)\subseteq N(T^{\dag})$. However, according to Lemma \ref{lemA}(iv),
 $T$ is EP.  The equivalences among statements (vi), (xii) and (xix)-(xx) and the condition of being EP 
can be proved in a similar way.\par

\indent The fifth and last case: $N(T^{\dag})\subseteq N(T)$.\par
\indent Suppose that statement (iii) holds, that is $TT^{\sharp}T^{\dag}=T^{\dag}TT^{\sharp}$.
Let $x$ belong to $N(T^{\dag})$. Then, $TT^{\sharp}(x)\in N(T^{\dag})\cap R(T)=0$ (Remark \ref{rema6}(iii)).
Thus, $x\in N(TT^{\sharp})=N(T)$ (Remark \ref{rema5}(iv)). Now well, since $N(T^{\dag})\subseteq N(T)$, according to
Lemma \ref{lemA}(v),
$T$ is EP.  The equivalences among statements (iv), (vii), (xiv) and (xviii) and the condition 
of being EP can be proved in a
similar way.\par

\indent Next consider an arbitrary Banach algebra $A$. According to Remark \ref{rema6}(ii) and Remark \ref{rema5}(i), 
it is possible to consider $L_a$, $L_{a^{\dag}}$,
 $L_{a^{\sharp}}\in L(A)$. What is more, $(L_a)^{\dag}=L_{a^{\dag}}$
and $(L_a)^{\sharp}=L_{a^{\sharp}}$. Then, if one of the 
statements holds, the same statement holds for $L_a$ and, according to what has been proved, $L_a$ is EP, 
which, according to Remark \ref{rema7}(ii), implies that $a$ is EP.
\end{proof}

\indent Next some results of \cite[Theorem 5.1]{D}, see also \cite[Theorem 2.3]{CT}, will be proved
in the context of Banach algebras.
Compare with \cite[Theorem 2.1]{MD0}, where EP elements  in rings with involution were studied.
However,  recall first that given a Banach algebra $A$ and $a\in A$, the element $a$
is said to be \it quasinilpotent\rm, if $\sigma (a)=\{ 0\}$, where $\sigma (a)$ denotes the
spectrum of $a$. In particular, if $a\in A$ is quasinilpotent, then $r(a)=0$,
where $r(a)$ stands for the \it spectral radius \rm of $a$, i. e., $r(a)$ = sup $\{\mid \lambda\mid\colon \lambda\in \sigma (a)\}$.\par

\begin{thm}\label{thm7} Let $A$ be a unital Banach algebra and
consider $a\in A$ such that $a^{\dag}$ exists. 
Then, necessary and suffcient for $a\in A$ to be EP is that one of the following statements holds.\par
$$
(i)\hskip.2truecm a^2a^{\dag} +a^{\dag}a^2=2a,\hskip.2truecm
(ii)\hskip.2truecm  (a^{\dag})^2a+a (a^{\dag})^2= 2a^{\dag},\hskip.2truecm 
(iii)\hskip.2truecm  a^{\sharp} \hbox{ exists and }a^{\dag}a^{\sharp}a+aa^{\sharp} a^{\dag}= 2a^{\dag}.
$$
\end{thm}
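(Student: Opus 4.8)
The plan is to prove necessity by a short computation and then, for sufficiency, to reduce everything to a bounded operator on a Banach space and argue geometrically. For necessity, assume $a$ is EP. By Remark \ref{rema5}(ii) the group inverse exists and $a^{\sharp}=a^{\dag}$, while $aa^{\dag}=a^{\dag}a$; from these two facts one reads off directly that $a^2a^{\dag}=a=a^{\dag}a^2$, that $(a^{\dag})^2a=a^{\dag}=a(a^{\dag})^2$, and that $a^{\dag}a^{\sharp}a=a^{\dag}=aa^{\sharp}a^{\dag}$, so (i), (ii) and (iii) all hold. For the converse I would first collapse the list. Since $L\colon A\to L(A)$ is a homomorphism with $(L_a)^{\dag}=L_{a^{\dag}}$ (Remark \ref{rema6}(ii)) and, whenever it exists, $(L_a)^{\sharp}=L_{a^{\sharp}}$ (Remark \ref{rema5}(i)), each of (i)--(iii) for $a$ becomes the very same identity for $L_a$; by Remark \ref{rema7}(ii) it then suffices to show that $L_a$ is EP. Moreover (ii) is precisely (i) written for $a^{\dag}$ (recall $(a^{\dag})^{\dag}=a$), and $a$ is EP iff $a^{\dag}$ is EP (Remark \ref{rema7}(i)), so (ii) reduces to (i). Thus I only need to treat (i) and (iii) for an operator $T\in L(X)$ having $T^{\dag}$ (and, in case (iii), also $T^{\sharp}$).

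Case (iii) is the quick one. With $T^{\sharp}$ available, $F:=TT^{\sharp}=T^{\sharp}T$ is an idempotent with $N(F)=N(T)$ and $X=N(T)\oplus R(T)$ (Remark \ref{rema5}(iii),(iv)), and since $T^{\sharp}T=TT^{\sharp}=F$, statement (iii) reads $T^{\dag}F+FT^{\dag}=2T^{\dag}$. Evaluating on $x\in N(T)=N(F)$ annihilates the first summand and yields $(F-2I)T^{\dag}x=0$; as $F$ is idempotent, $F-2I$ is invertible (explicitly $(F-2I)^{-1}=-\tfrac12(F+I)$), so $T^{\dag}x=0$. Hence $N(T)\subseteq N(T^{\dag})$, and $T$ is EP by Lemma \ref{lemA}.

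Case (i) is the crux, because $T^{\sharp}$ is not assumed and Lemma \ref{lemA} is therefore unavailable; one has only the weaker splitting $X=R(T)\oplus N(T^{\dag})$ from Remark \ref{rema6}(iii). Writing $T$ and $T^{\dag}$ in block form with respect to it gives $T=\left(\begin{smallmatrix}B&C\\0&0\end{smallmatrix}\right)$ (image in $R(T)$) and $T^{\dag}=\left(\begin{smallmatrix}D&0\\E&0\end{smallmatrix}\right)$ (kernel $N(T^{\dag})$), so that $TT^{\dag}=\left(\begin{smallmatrix}I&0\\0&0\end{smallmatrix}\right)$, i.e. $BD+CE=I$, while $T^{\dag}T=\left(\begin{smallmatrix}DB&DC\\EB&EC\end{smallmatrix}\right)$; here $T$ is EP exactly when $T^{\dag}T=TT^{\dag}$. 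Matching the four blocks in $T^2T^{\dag}+T^{\dag}T^2=2T$ produces $DB^2=B$, $DBC=2C$, $EB^2=0$ and $EBC=0$.

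The main obstacle is to squeeze enough out of these four relations. The equation $DBC=2C$ says that the compression $DB$ acts as $2\,\mathrm{id}$ on $R(C)$, so the whole difficulty is to show that this eigenvalue-$2$ part is trivial. The $(1,1)$ block of the idempotency relation $(T^{\dag}T)^2=T^{\dag}T$ gives $DC\,EB=DB-(DB)^2$, which on $R(C)$ equals $(2-4)\,\mathrm{id}=-2\,\mathrm{id}$; on the other hand $EBC=0$ means $EB$ annihilates $R(C)$, so the left-hand side vanishes there. Comparing forces $R(C)=0$, i.e. $C=0$. Then $BD=I$, whence $DB^2=B$ gives $DB=I$ and $EB^2=0$ gives $EB=0$, so $T^{\dag}T=\left(\begin{smallmatrix}I&0\\0&0\end{smallmatrix}\right)=TT^{\dag}$ and $T$ is EP. I expect the sole spectral input to be that the compression $DB$ can only carry the eigenvalues $1$ and $2$, which is the role played by the spectral-radius and quasinilpotency considerations recalled just before the statement; once $T$ is shown EP, lifting back through $L_a$ completes the proof for an arbitrary unital Banach algebra.
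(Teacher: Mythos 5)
Your proof is correct, but it takes a genuinely different route from the paper's. The paper disposes of the key statement (i) inside the algebra by spectral means: (i) is equivalent to $a(aa^{\dag}-a^{\dag}a)=(aa^{\dag}-a^{\dag}a)a$, so Kleinecke's theorem makes the commutator $aa^{\dag}-a^{\dag}a$ quasinilpotent, and since that commutator is a difference of hermitian elements its norm equals its spectral radius, forcing $aa^{\dag}=a^{\dag}a$. Your block computation relative to $X=R(T)\oplus N(T^{\dag})$ replaces both analytic ingredients by pure algebra: it uses only that $TT^{\dag}$ is the idempotent with range $R(T)$ and kernel $N(T^{\dag})$ and that $T^{\dag}T$ is idempotent, never that these are hermitian, so it in fact proves the formally stronger assertion that $a^2b+ba^2=2a$ forces $ab=ba$ for an \emph{arbitrary} normalized generalized inverse $b$. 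Two remarks on your write-up: first, extracting your four relations takes one step you left implicit (the $(1,1)$ block of $T^2T^{\dag}+T^{\dag}T^2=2T$ is $B^2D+BCE+DB^2=2B$, which becomes $DB^2=B$ only after inserting $BD+CE=I$); second, your closing expectation of a ``spectral-radius and quasinilpotency'' input is a red herring, since multiplying the $(1,1)$ block of $(T^{\dag}T)^2=T^{\dag}T$ on the right by $C$ and using $EBC=0$ and $DBC=2C$ yields $4C=2C$, hence $C=0$, with no spectral theory at all --- your argument is already complete as a finite computation. For (iii), the paper multiplies the hypothesis by $a$ and then by $a^{\dag}$ to reach condition (xx) of Theorem \ref{thm6} and invokes that theorem, whereas you obtain $N(T)\subseteq N(T^{\dag})$ directly from the invertibility of $TT^{\sharp}-2I$ (an idempotent shifted by $-2I$) and finish with Lemma \ref{lemA}(iv), bypassing Theorem \ref{thm6} altogether; both arguments are sound. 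Your treatment of (ii) and the final lift from $L_a$ back to $a$ coincide with the paper's. In sum, the paper's route is shorter and never leaves the algebra $A$; yours is more elementary (no Kleinecke theorem, no hermitian spectral-radius theorem) and correspondingly more general.
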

\begin{proof}
\indent Clearly, if $a\in A$ is EP, then all the statements hold.\par
\indent Note that statement (i) is equivalent to $a(aa^{\dag}-a^{\dag}a)=(aa^{\dag}-a^{\dag}a)a$.
Now well, according to \cite{Kl}, $aa^{\dag}-a^{\dag}a$ is quasinilpotent. However, according to \cite[Theorem 17, Chapter I, Section 10]{BD},
the spectral radius of $aa^{\dag}-a^{\dag}a$ coincides with $\parallel aa^{\dag}-a^{\dag}a\parallel$.
Therefore, $aa^{\dag}=a^{\dag}a$.\par
\indent To prove that statement (ii) implies that $a$ is EP,  apply (i)  interchanging $a$ with $a^{\dag}$.\par
\indent Suppose that statement (iii) holds. Multiplying by $a$ it is not difficult to obtain 
$$
aa^{\sharp} + aa^{\dag}= 2aa^{\dag}.
$$ 
Thus, $aa^{\sharp}= aa^{\dag}$. However, multiplying by $a^{\dag}$, $a^{\dag}aa^{\sharp}=a^{\dag}$.
Consequently, according to Theorem \ref{thm6}(xx), $a$ is EP.
\end{proof}
\indent In what follows, the ascent and the descent of a Banach space operators will be
used to characterize EP bounded and linear maps and EP Banach algebra elements.\par

\begin{thm}\label{thm8}Let $X$ be a Banach space, and
consider $T\in L(X)$ such that $T^{\dag}$ exists. Then $T$ is EP
if and only if one of the following statements holds.\par
\begin{align*} 
&(i)&&a (T)<\infty \hbox{ and } T(T^{\dag})^2= T^{\dag},&
&(ii)&&a (T^{\dag})<\infty \hbox{ and } T^{\dag}T^2= T,&\\
&(iii)&&d (T^{\dag})<\infty \hbox{ and } T^2T^{\dag}=T,&
&(iv)&&d (T) <\infty \hbox{ and } (T^{\dag})^2 T=T^{\dag},&\\ 
&(v)&&a (T) <\infty\hbox{ and }T^2T^{\dag} =T,&
&(vi)&&a (T^{\dag})<\infty \hbox{ and }  (T^{\dag})^2T=T^{\dag},&\\ 
&(vii)&&d (T)<\infty \hbox{ and}\hskip.2truecm T^{\dag}T^2=T,&
&(viii)&&d (T^{\dag})<\infty \hbox{ and}\hskip.0575truecmT(T^{\dag})^2= T^{\dag}.\\
\end{align*}
\indent Furthermore, necessary and sufficient for $T$ to be $EP$ is that $T^{\sharp}$ exists and one of the
following identities holds.
$$
(ix)\hskip.2truecm T=T^{\dag}T^2,\hskip.1truecm (x)\hskip.2truecm T=T^2T^{\dag},\hskip.1truecm (xi)\hskip.2truecm T^{\dag}=T(T^{\dag})^2,
\hskip.1truecm (xii)\hskip.2truecm T^{\dag}=(T^{\dag})^2 T. 
$$
\end{thm}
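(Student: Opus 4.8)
The plan is to settle the trivial ``only if'' direction at once and then concentrate on the converses. If $T$ is EP then $T^{\sharp}=T^{\dag}$ by Remark \ref{rema5}(ii), so $T$ and $T^{\dag}$ commute and every one of the twelve displayed relations collapses to a defining identity of the Moore--Penrose or group inverse; moreover $a(T),d(T)\le 1$ by Remark \ref{rema5}(iii), and the same holds for $T^{\dag}$ since $T^{\dag}$ is EP by Remark \ref{rema7}(i). Thus all the listed conditions hold, and the real content is the reverse implications.

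Before treating them I would record two elementary equivalences, valid for any $T\in L(X)$: one has $a(T)\le 1$ if and only if $N(T)=N(T^{2})$, equivalently $N(T)\cap R(T)=\{0\}$; and $d(T)\le 1$ if and only if $R(T)=R(T^{2})$, equivalently $N(T)+R(T)=X$. Combined with the two direct sum decompositions $X=R(T)\oplus N(T^{\dag})=R(T^{\dag})\oplus N(T)$ of Remark \ref{rema6}(iii), these equivalences convert any one of the four inclusions between the ranges or null spaces of $T$ and $T^{\dag}$ into a one-sided bound on ascent or descent: if $R(T^{\dag})\subseteq R(T)$ or $N(T^{\dag})\subseteq N(T)$ then $X=R(T)+N(T)$, so $d(T)\le 1$; and if $R(T)\subseteq R(T^{\dag})$ or $N(T)\subseteq N(T^{\dag})$ then $R(T)\cap N(T)=\{0\}$, so $a(T)\le 1$.

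With this machinery each of (i)--(viii) runs identically. From the algebraic identity I read off exactly one inclusion --- for instance $T(T^{\dag})^{2}=T^{\dag}$ gives $R(T^{\dag})\subseteq R(T)$, $(T^{\dag})^{2}T=T^{\dag}$ gives $N(T)\subseteq N(T^{\dag})$, $T^{2}T^{\dag}=T$ gives $N(T^{\dag})\subseteq N(T)$, and $T^{\dag}T^{2}=T$ gives $R(T)\subseteq R(T^{\dag})$ --- whence one of $a(T),d(T)$ is at most $1$ by the previous paragraph. The finiteness hypothesis supplies the complementary quantity, so by the coincidence of finite ascent and descent, \cite[Theorem 3.6]{T}, both are at most $1$ and therefore $T^{\sharp}$ exists by Remark \ref{rema5}(iii); Lemma \ref{lemA}, applied to the inclusion at hand, then yields that $T$ is EP. In practice I would prove the four cases whose hypothesis concerns $T$, namely (i), (iv), (v) and (vii), and deduce (ii), (iii), (vi) and (viii) by applying these to $T^{\dag}$, using $(T^{\dag})^{\dag}=T$ (Remark \ref{rema6}(i)) together with the fact that $T$ is EP precisely when $T^{\dag}$ is (Remark \ref{rema7}(i)). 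The final statements (ix)--(xii) are quicker, since now $T^{\sharp}$ is assumed to exist and no ascent or descent estimate is needed: each identity again produces one inclusion ($T=T^{\dag}T^{2}$ gives $R(T)\subseteq R(T^{\dag})$, $T=T^{2}T^{\dag}$ gives $N(T^{\dag})\subseteq N(T)$, $T^{\dag}=T(T^{\dag})^{2}$ gives $R(T^{\dag})\subseteq R(T)$, and $T^{\dag}=(T^{\dag})^{2}T$ gives $N(T)\subseteq N(T^{\dag})$), to which Lemma \ref{lemA} applies directly.

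The one genuine obstacle is the production of $T^{\sharp}$, and it is exactly here that finiteness of the ascent or the descent is indispensable. The algebraic identity alone forces only one of $a(T),d(T)$ to be at most $1$, and an inclusion such as $R(T^{\dag})\subseteq R(T)$ is strictly weaker than the equality $R(T^{\dag})=R(T)$ that Remark \ref{rema7}(iii) would require for a direct EP conclusion; that equality is recovered from the inclusion only through Lemma \ref{lemA}, whose standing hypothesis is the existence of $T^{\sharp}$. Thus the finiteness assumption is precisely the ingredient that, via the ascent--descent coincidence theorem, upgrades the one-sided bound into group invertibility and closes the argument. Everything else is bookkeeping, so I would take care only to record cleanly the two equivalences of the second paragraph and their translation through the decompositions of Remark \ref{rema6}(iii).
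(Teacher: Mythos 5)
Your proof is correct and follows essentially the same route as the paper's: read an inclusion between ranges or null spaces off the algebraic identity, combine the finiteness hypothesis with Taylor's coincidence theorem to produce $T^{\sharp}$, invoke Lemma \ref{lemA}, and dispose of half the cases by interchanging $T$ with $T^{\dag}$. The only cosmetic difference is that you convert the inclusion into the one-sided ascent/descent bound uniformly via the decompositions of Remark \ref{rema6}(iii), whereas the paper extracts bounds such as $R(T)\subseteq R(T^{2})$ or $N((T^{\dag})^{2})\subseteq N(T^{\dag})$ directly from the identity case by case.
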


\begin{proof} It is clear that if $T$ is an EP operator, then all the statements hold.\par

\indent On the other hand, if statement (i) holds, then $T^2(T^{\dag})^2= TT^{\dag}$.
In particular, $R(T)=R(TT^{\dag})\subseteq R(T^2)$ (Remark \ref{rema6}(iii)). Therefore, $d (T)\leq 1$, and since
$a(T)$ and $d(T)$ are finite, according to \cite[Theorem 3.6]{T}, $a(T)\le 1$ and $d(T)\leq 1$.
Consequently, according to Remark \ref{rema5}(iii), $T^{\sharp}$ exists. However, since $R(T^{\dag})\subseteq R(T)$,
according to Lemma \ref{lemA}(iii), $T$ is EP.\par

\indent Suppose that statement (iii) holds and let $x\in N((T^{\dag})^2)$. Then
$0=T^2(T^{\dag})^2(x) =TT^{\dag}(x)$. Consequently, $x\in N(TT^{\dag})=
N(T^{\dag})$ (Remark \ref{rema6}(iii)). Thus $a(T^{\dag})\leq 1$. However, since $d(T^{\dag})$
is finite and $a(T^{\dag})\leq 1$, according to \cite[Theorem 3.6]{T}, 
$(T^{\dag})^{\sharp}$ exists. Now well, since $N(T^{\dag})\subseteq N(T)$,
according to Lemma \ref{lemA}(iv) and   Remark \ref{rema7}(i),
$T$ is EP.\par

\indent If statement (v) holds, then $d (T)\leq 1$. Consequently, using 
an argument similar to the one in the previous paragraphs, $T^{\sharp}$ exists. However, 
since $N(T^{\dag})\subseteq N(T)$, according to Lemma \ref{lemA}(v),
 $T$ is EP.\par

\indent If statement (vii) holds, then $a(T)\le 1$, and then as before, 
$T^{\sharp}$ exists. However, since $R(T)\subseteq R(T^{\dag})$,
according to Lemma \ref{lemA}(ii), $T$ is EP.\par

\indent To prove that statements (ii), (iv), (vi) and (viii) imply the condition of being EP, use what has been proved
and interchage $T$ with $T^{\dag}$.\par
\indent Concerning the second characterization, if $T$ is EP, it is clear that $T^{\sharp}$ exists and 
statements (ix)-(xii) hold. On the other hand, if $T$ has a group inverse and one of the statements (ix) - (xii)
holds, then, according to Remark \ref{rema5}(iii) and what has been proved,  $T$ is EP.
\end{proof}

\indent Next Theorem \ref{thm8} will be applied to prove new characterizations of
EP Banach algebra elements. \par

\begin{thm}\label{thm9}Let $A$ be a unital Banach algebra and
consider $a\in A$ such that $a^{\dag}$ exists. Then necessary
and sufficient for $a\in A$ to be EP
is that $a^{\sharp}$ exists and one of the following statements holds.
$$
(i)\hskip.2truecm a=a^{\dag}a^2,\hskip.1truecm (ii)\hskip.2truecm a=a^2a^{\dag},\hskip.1truecm (iii)\hskip.2truecm a^{\dag}=a(a^{\dag})^2,
\hskip.1truecm (iv)\hskip.2truecm a^{\dag}=(a^{\dag})^2 a. 
$$
\end{thm}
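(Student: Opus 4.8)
The plan is to reduce this Banach algebra statement to the already-proven operator characterization in Theorem \ref{thm8} via the left-multiplication representation, exactly as was done at the end of the proof of Theorem \ref{thm6}. The forward direction is immediate: if $a$ is EP then $a^\sharp$ exists (indeed $a^\sharp=a^\dag$ by Remark \ref{rema5}(ii)) and $aa^\dag=a^\dag a$, from which each of the four identities (i)--(iv) follows by a one-line manipulation using $a=aa^\dag a$ and $a^\dag=a^\dag a a^\dag$. So the substance is entirely in the four converse implications.

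For the converse, I would pass to $L_a\in L(A)$. By Remark \ref{rema6}(ii) and Remark \ref{rema5}(i), since $a^\dag$ and $a^\sharp$ exist we have $(L_a)^\dag=L_{a^\dag}$ and $(L_a)^\sharp=L_{a^\sharp}$, so $L_a$ is an operator on the Banach space $A$ whose Moore-Penrose and group inverses both exist. The key observation is that each identity (i)--(iv) transports verbatim to $L_a$: because $L$ is a multiplicative homomorphism (the excerpt records $L_{ab}=L_aL_b$ and injectivity $L_a=L_b\Rightarrow a=b$), the hypothesis $a=a^\dag a^2$ becomes $L_a=L_{a^\dag}(L_a)^2$, and similarly for the other three. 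These are precisely statements (ix)--(xii) of Theorem \ref{thm8} for the operator $T=L_a$. Since $(L_a)^\sharp$ exists, Theorem \ref{thm8} yields that $L_a$ is EP, and then Remark \ref{rema7}(ii) gives that $a$ is EP. This closes all four cases simultaneously.

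I do not expect a genuine obstacle here, since the hard analytic work (relating the range and null-space inclusions to the ascent, descent, and group invertibility) was already carried out for operators in Theorem \ref{thm8}. The only point requiring a moment of care is verifying that the translation of hypotheses is faithful in both directions: one must confirm that $(L_a)^\dag=L_{a^\dag}$ and $(L_a)^\sharp=L_{a^\sharp}$ actually hold under the stated assumptions, so that, for example, $T(T^\dag)^2=T^\dag$ for $T=L_a$ genuinely corresponds to $a(a^\dag)^2=a^\dag$ and not to some unrelated identity. Both facts are supplied by Remark \ref{rema6}(ii) and Remark \ref{rema5}(i), and the injectivity of $L$ guarantees that no information is lost when descending from the operator identity back to the algebra element. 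Thus the proof is essentially a clean application of the representation technique, and I would keep the write-up short, explicitly recording only the forward direction computations and the single sentence invoking Theorem \ref{thm8} together with Remark \ref{rema7}(ii) for the converse.
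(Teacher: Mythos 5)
Your proposal is correct and follows essentially the same route as the paper: the paper's proof likewise notes $(L_a)^{\dag}=L_{a^{\dag}}$ and $(L_a)^{\sharp}=L_{a^{\sharp}}$ (Remark \ref{rema6}(ii), Remark \ref{rema5}(i)), applies Theorem \ref{thm8} (statements (ix)--(xii)) to $T=L_a$, and concludes via Remark \ref{rema7}(ii). Your write-up is merely more explicit about the forward direction and the faithfulness of the translation, which the paper leaves implicit.
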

\begin{proof} According to  Remark \ref{rema6}(ii) and Remark \ref{rema5}(i), $(L_a)^{\dag}=L_{a^{\dag}}$ and $(L_a)^{\sharp}=L_{a^{\sharp}}$ respect-
ively.
Moreover, according to  Remark \ref{rema7}(ii), $a$ is EP if and only if $L_{a^{\dag}}$ is EP. 
To prove the Theorem it is then enough to apply Theorem \ref{thm8} to $L_a\in L(A)$.
\end{proof}

\section {\sfstp Normal Banach algebra elements}
\
\indent  In the present section Moore-Penrose invertible normal elements in 
arbitrary Banach algebras will be studied. In first place, a well known property will be considered.\par

\indent Recall that necessary and sufficient for a $m\times m$ complex matrix $M$ to be normal
is that $M$ is EP and $M^*M^{\dag}=M^{\dag}M^*$, where $M^*$ denotes the conjugate transpose of $M$,
see \cite[Lemma 1.1(d)]{CT} and the references mentioned there. In the contex of $C^*$-algebras,
a similar result holds for normal Moore-Penrose invertible elements of the algebra. For sake
of completeness, this fact will be proved.\par

\begin{thm}\label{thm19}Let $A$ be $C^*$-algebra and consider $a\in A$
such that $a^{\dag}$ exists. Then, necessary and sufficient for
$a$ to be normal is that $a$ is EP and $a^*a^{\dag}=a^{\dag}a^*$.
\end{thm}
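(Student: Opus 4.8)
The plan is to treat the two implications separately, using throughout that in a $C^*$-algebra the Moore–Penrose inverse obeys the four Penrose equations, so that $aa^{\dag}$ and $a^{\dag}a$ are self-adjoint idempotents and $*$ is a genuine involution (in particular $(xy)^*=y^*x^*$). These two features are exactly what distinguishes the present setting from the purely Banach-algebraic one treated earlier, and they are what make both directions go through.

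For the necessity (that $a$ normal implies $a$ is EP and $a^*a^{\dag}=a^{\dag}a^*$), I would set $b=aa^*=a^*a$, which is self-adjoint. Since $a^{\dag}$ exists, $R(a)$ is closed, hence so is $R(b)$ and $b^{\dag}$ exists; moreover the standard $C^*$-algebra identity $a^{\dag}=a^*b^{\dag}=b^{\dag}a^*$ holds. Normality gives $ab=ba$ and, taking adjoints, $a^*b=ba^*$. The essential observation is that $b^{\dag}$ lies in the commutative $C^*$-subalgebra generated by $b$: since $b$ is self-adjoint with closed range, $b^{\dag}$ is a continuous function of $b$ on $\sigma(b)$ (with $0$ either isolated or absent), so $b^{\dag}$ commutes with every element commuting with $b$, in particular with $a$ and with $a^*$. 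Then $aa^{\dag}=aa^*b^{\dag}=bb^{\dag}=b^{\dag}b=b^{\dag}a^*a=a^{\dag}a$, so $a$ is EP, and in the same way $a^*a^{\dag}=b^{\dag}(a^*)^2=a^{\dag}a^*$.

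For the sufficiency (that $a$ EP together with $a^*a^{\dag}=a^{\dag}a^*$ forces $a$ normal) the argument is purely algebraic. I would put $p=aa^{\dag}=a^{\dag}a$; being EP it is a self-adjoint idempotent with $ap=pa=a$, and hence, taking adjoints, $a^*p=pa^*=a^*$. Multiplying the hypothesis $a^*a^{\dag}=a^{\dag}a^*$ on the left and on the right by $a$ yields $aa^*(a^{\dag}a)=(aa^{\dag})a^*a$, that is $aa^*p=pa^*a$; since $a^*p=a^*$ and $pa^*=a^*$, this collapses to $aa^*=a^*a$, so $a$ is normal.

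The hard part will be the functional-calculus input in the necessity half: justifying both the representation $a^{\dag}=b^{\dag}a^*=a^*b^{\dag}$ and the fact that $b^{\dag}$ commutes with the commutant of $b$. Once those are in place, everything reduces to bookkeeping with the Penrose equations and the commutation relations $ab=ba$, $a^*b=ba^*$. By contrast, the sufficiency direction requires no analysis beyond the self-adjointness of $aa^{\dag}$ and $a^{\dag}a$, and is the cleaner of the two.
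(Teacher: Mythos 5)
Your proof is correct, but both halves take routes genuinely different from the paper's. For the necessity, the paper does no work at all: it simply cites \cite[Theorem 5]{HM1} (Harte--Mbekhta), whereas you essentially reconstruct that result. Your ingredients are all valid: the identities $a^{\dag}=a^*b^{\dag}=b^{\dag}a^*$ for $b=aa^*=a^*a$ follow from the standard formulas $(a^*a)^{\dag}=a^{\dag}(a^{\dag})^*$ and $a^{\dag}=(a^*a)^{\dag}a^*=a^*(aa^*)^{\dag}$, and the functional-calculus step rests on the (true, but also Harte--Mbekhta-level) fact that a self-adjoint regular element of a $C^*$-algebra has $0$ isolated in, or absent from, its spectrum. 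You could even avoid functional calculus entirely: a self-adjoint regular $b$ satisfies $(b^{\dag})^*=b^{\dag}$ and hence $bb^{\dag}=b^{\dag}b$, so $b^{\dag}=b^{\sharp}$, and the group inverse always lies in the double commutant of the element by a purely algebraic argument. For the sufficiency, where your argument is at its best, the paper proceeds quite differently: it sets $v=aa^*-a^*a$, verifies $va^{\dag}=0$ by direct computation, invokes \cite[Theorem 3.1(iv)]{K} to get $a^{-1}(0)=(a^*)^{-1}(0)$ from EP-ness, and then uses the decomposition $A=a^{\dag}A\oplus a^{-1}(0)$ (obtained through the operator $L_a$) to force $v=0$. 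Your alternative---multiply $a^*a^{\dag}=a^{\dag}a^*$ by $a$ on both sides and use $pa^*=a^*p=a^*$ for the self-adjoint idempotent $p=aa^{\dag}=a^{\dag}a$---collapses this to two lines of pure ring-with-involution algebra, needing neither Koliha's theorem nor any direct-sum decomposition; in fact it establishes the implication in any ring with involution in which $a$ is Moore--Penrose invertible, which is the level of generality of \cite{MD}. What the paper's longer route buys is uniformity with the techniques used in the rest of the article, where $\mathcal{V}(A)\to\mathcal{V}(A)$ is not an involution, $(xy)^*\neq y^*x^*$ in general, and such adjoint-taking cancellations are unavailable, so that range/null-space decompositions are the only tool; your argument exploits precisely the extra structure the $C^*$ setting provides.
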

\begin{proof} Let $a\in A$ be a normal Moore-Penrose invertible element. 
Then, according to \cite[Theorem 5]{HM1}, $a$ is EP and $a^*$ and $a^{\dag}$ commute.\par
\indent Now suppose that $aa^{\dag}=a^{\dag}a$ and $a^*a^{\dag}=a^{\dag}a^*$, 
and consider $v=aa^*-a^*a$. A straightforward calculation, using in particular that
$aa^{\dag}$ and $a^{\dag}a$ are self-adjoint, proves that $va^{\dag}=0$. 
Consequently, $a^{\dag}A\subseteq v^{-1}(0)$.
In addition, since according to \cite[Theorem 3.1(iv)]{K} $a^{-1}(0)=(a^*)^{-1}(0)$,
$a^{-1}(0)\subseteq v^{-1}(0)$. However, according to Remark \ref{rema6}(ii),
$L_a\in L(A)$ is Moore-Penrose invertible, what is more, according to
 Remark \ref{rema6}(ii)-(iii), $A= a^{\dag}A\oplus a^{-1}(0)$.
Therefore, $v^{-1}(0)=A$, equivalently, $a$ is normal.
\end{proof}

\indent To characterize normal Moore-Penrose invertible 
Banach algebra elements,
some prep-
aration is needed.\par

\begin{thm}\label{thm13}Let $X$ be a Banach space and consider $T\in L(X)$
such that $T^{\dag}$ exists and $T\in   \mathcal{V}(L(X))$. Then, the following statements
hold.\par
\noindent (i) $R(T^*)\subseteq R(T)$ if and only if $T=TTT^{\dag}$.\par
\noindent (ii) $N(T)\subseteq N(T^*)$ if and only if $T=T^{\dag}TT$.\par
\indent In addition, necessary and sufficient for $T$ to be EP is that the conditions of statements (i) and (ii) hold.
\end{thm}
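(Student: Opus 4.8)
The plan is to dispose of the two biconditionals (i) and (ii) by translating every identity into a range or null space inclusion, and then to assemble the EP characterization from Lemma \ref{lemA}. Throughout I would work with the two hermitian idempotents $p=TT^{\dag}$ and $q=T^{\dag}T$ furnished by Remark \ref{rema6}(iii), recording that $R(p)=R(T)$, $N(p)=N(T^{\dag})$, $R(q)=R(T^{\dag})$, $N(q)=N(T)$, together with the standing identities $pT=T=Tq$. A direct computation then gives the right-hand sides: $T=TTT^{\dag}$ means $Tp=T$, i.e. $T(I-p)=0$, which (since $R(I-p)=N(p)=N(T^{\dag})$) says exactly $N(T^{\dag})\subseteq N(T)$; symmetrically $T=T^{\dag}TT$ means $qT=T$, i.e. $(I-q)T=0$, which says $R(T)\subseteq R(q)=R(T^{\dag})$.

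Next I would rewrite the left-hand sides. Because $p$ is a projection onto $R(T)$ that fixes $R(T)$ pointwise, $R(T^{*})\subseteq R(T)$ holds iff $pT^{*}=T^{*}$; because $N(T)=N(q)=R(I-q)$, the inclusion $N(T)\subseteq N(T^{*})$ holds iff $T^{*}(I-q)=0$, i.e. $T^{*}q=T^{*}$. Thus statement (i) is reduced to the equivalence of $pT^{*}=T^{*}$ with $Tp=T$, and statement (ii) to the equivalence of $T^{*}q=T^{*}$ with $qT=T$.

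The bridge between these one-sided conditions is the step I expect to be the main obstacle, precisely because $^{*}$ is only conjugate-linear and is \emph{not} anti-multiplicative on $\mathcal{V}(L(X))$, so one may not simply transpose $p$ from one side of $T^{*}$ to the other. The device I would use exploits the hermitian idempotents: from $Tp=T$ and $pT=T$ one gets the compression identity $pTp=T$, and then, writing $T=U+iV$ with $U,V$ hermitian, one wants $(pTp)^{*}=pT^{*}p$, which would give $T^{*}=(pTp)^{*}=pT^{*}p$ and hence $pT^{*}=T^{*}$; the converse and statement (ii) run symmetrically, compressing by $p$ on the correct side and by $q$. Everything therefore hinges on showing that compression by a hermitian idempotent sends $\mathcal{V}(L(X))$ into itself and commutes with $^{*}$, equivalently that $pUp$ and $pVp$ are again hermitian; this is the one place where the absence of a genuine involution must be circumvented, and it is where I would concentrate the real work, using the defining norm condition for hermitian operators rather than any $C^{*}$-identity.

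Finally, granting (i) and (ii), I would read off the EP characterization. If the conditions hold then $N(T^{\dag})\subseteq N(T)$ and $R(T)\subseteq R(T^{\dag})$. Feeding these into the decompositions $X=R(T)\oplus N(T^{\dag})$ and $X=R(T^{\dag})\oplus N(T)$ of Remark \ref{rema6}(iii) shows that $R(T)\cap N(T)=0$ (as $R(T)\subseteq R(T^{\dag})$) and $R(T)+N(T)=X$ (as $N(T^{\dag})\subseteq N(T)$), so $X=R(T)\oplus N(T)$ and $T^{\sharp}$ exists by Remark \ref{rema5}(iii); then $R(T)\subseteq R(T^{\dag})$ and Lemma \ref{lemA} give that $T$ is EP. Conversely, if $T$ is EP then $p=TT^{\dag}=T^{\dag}T=q$, whence $Tp=Tq=T$ and $qT=pT=T$, so both operator identities, and therefore both conditions of (i) and (ii), hold.
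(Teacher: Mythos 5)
Your reductions of the four conditions to the one-sided identities $pT^{*}=T^{*}$, $T^{*}q=T^{*}$, $Tp=T$, $qT=T$ are correct, and your closing EP argument (getting $X=R(T)\oplus N(T)$ from the two inclusions, hence $T^{\sharp}$, then Lemma \ref{lemA}) is a valid, self-contained alternative to the paper's citation of \cite[Theorem 18(xii)]{Bo}. But the bridge you defer — ``the real work'' — is the entire content of the theorem, and the lemma you propose to carry it, namely that compression by a hermitian idempotent maps $\mathcal{V}(L(X))$ into itself and commutes with $^{*}$ (equivalently, that $pUp$ is hermitian whenever $p$ is a hermitian idempotent and $U$ is hermitian), is \emph{false} in general Banach spaces. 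Take $X=M_2(\mathbb{C})$ with the operator norm, $p=L_{e_{11}}$ (left multiplication by the matrix unit $e_{11}$) and $U=R_k$ (right multiplication by $k=\left(\begin{smallmatrix}0&1\\1&0\end{smallmatrix}\right)$). Both are hermitian in $L(X)$, since $\exp(itL_{e_{11}})=L_{\exp(ite_{11})}$ and $\exp(itR_{k})=R_{\exp(itk)}$ are isometries, and $p$ is idempotent. Here $pUp\colon x\mapsto e_{11}xk$, and
$$
\exp\Bigl(i\tfrac{\pi}{2}\,pUp\Bigr)(1)=e_{22}+e_{11}\exp\bigl(i\tfrac{\pi}{2}k\bigr)=\begin{pmatrix}0&i\\0&1\end{pmatrix},
$$
which has norm $\sqrt{2}$, so $\|\exp(i\tfrac{\pi}{2}\,pUp)\|>1$ and $pUp$ is not hermitian — note this happens even though $p$ and $U$ commute. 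Moreover, in the only instance where you would invoke the lemma, namely $pTp=T$, the claimed identity $(pTp)^{*}=pT^{*}p$ \emph{is} the statement $T^{*}=pT^{*}p$ you are trying to prove, so no genuine reduction has taken place.

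The paper circumvents exactly this obstruction by never forming two-sided compressions. For $R(T^{*})\subseteq R(T)\Rightarrow T=TTT^{\dag}$ it adds and subtracts $pT=T$ and $pT^{*}=T^{*}$ to get $pU=U$, $pV=V$, and then invokes Berkson's theorem (\cite[Theorem 2.13]{Be1}): a hermitian operator whose range lies in the range of a hermitian projection commutes with that projection; this yields $Up=pU=U$, $Vp=pV=V$, hence $Tp=T$. For the converse it uses $Tp=T=pT$ to get the commutator identity $(Up-pU)+i(Vp-pV)=0$, and Vidav's Hilfssatz 2 ($i[h,k]$ is hermitian for hermitian $h,k$, together with uniqueness of the decomposition $u+iv$) to force $Up=pU$, $Vp=pV$; then $R(T)$ and $N(T^{\dag})$ are invariant under $U$ and $V$, and Dowson's Proposition 4.12 is applied to the restrictions of $U,V$ to $N(T^{\dag})$ \emph{as operators on that subspace} — not extended by zero, precisely to avoid the false compression claim — giving $U'=V'=0$ there, hence $T^{*}(N(T^{\dag}))=0$ and $T^{*}(R(T))\subseteq R(T)$, so $R(T^{*})\subseteq R(T)$. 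Note also that even if your compression lemma were granted, your converse direction is not ``symmetric'': from $pT^{*}=T^{*}$ alone no two-sided compression of $T^{*}$ can be formed, and one still needs the commutator-plus-uniqueness argument. In short, the missing input is the Berkson--Vidav--Dowson machinery for hermitian projections; it cannot be replaced by a direct compression argument.
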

\begin{proof} Since $R(T)=R(TT^{\dag})=N(I-TT^{\dag})$,
$R(T^*)\subseteq R(T)$ is equivalent to $T^*=TT^{\dag}T^*$. \par
\indent Next consider $U$, $V\in \mathcal{H}(L(X))$ such that 
$T=U+iV$ and $T^*=U-iV$. Recall also that $T=TT^{\dag}T$.
Then, adding and substracting $T$ and $T^*$, $U=TT^{\dag}U$ and $V=TT^{\dag}V$.
In addition, according to \cite[Theorem 2.13]{Be1}, $UTT^{\dag}=TT^{\dag}U$
and $VTT^{\dag}=TT^{\dag}V$.   Therefore $T=U+iV=UTT^{\dag} +iVTT^{\dag}
=TTT^{\dag}$.\par
\indent On the other hand, if $T=TTT^{\dag}$, then 
$$
U+iV=(U+iV)TT^{\dag} =TT^{\dag}(U+iV).
$$
In particular $(UTT^{\dag}-TT^{\dag}U) + i(VTT^{\dag}-TT^{\dag}V)=0$.
However, since  $V$, $TT^{\dag}\in \mathcal{H}(L(X))$, according to 
\cite[Hilfssatz 2(b)]{V}, $i(VTT^{\dag}-TT^{\dag}V)\in \mathcal{H}(L(X))$.
In addition, according to \cite[Hilfssatz 2(a)]{V},
 $(UTT^{\dag}-TT^{\dag}U)=- i(VTT^{\dag}-TT^{\dag}V)\in \mathcal{H}(L(X))$.
Moreover, multiplying by $-i$ the identity in the third line of the present
paragraph, $(VTT^{\dag}-TT^{\dag}V) +i(TT^{\dag}U-UTT^{\dag})=0$.
Then, an argument similar to the previous one proves that  
$(VTT^{\dag}-TT^{\dag}V)\in \mathcal{H}(L(X))$. However, according to  \cite[Hilfssatz 2(c)]{V},
$UTT^{\dag}=TT^{\dag}U$ and $VTT^{\dag}=TT^{\dag}V$. Consequently, since $TT^{\dag}$ is an idempotent, $R(T)=R(TT^{\dag})$
and $N(T^{\dag})=N(TT^{\dag})$ are closed invariant subspaces both for $U$ and 
$V$.  \par

\indent Consider $U'=U\mid_{N(T^{\dag})}^{N(T^{\dag})}\in L(N(T^{\dag}))$ 
and $V'=V\mid_{N(T^{\dag})}^{N(T^{\dag})}\in L(N(T^{\dag}))$.
According to \cite[Proposition 4.12]{Dw}, $U'$, $V'\in  \mathcal{H}(L(N(T^{\dag})))$. If
$T'$ is the restrictions of $T$ to $N(T^{\dag})$,
then it is clear that  $T'=U'+iV'$.
However, since $T=TTT^{\dag}$, $N(T^{\dag})\subseteq N(T)$, which, according to 
 \cite[Hilfssatz 2(c)]{V}, implies that $U'=V'=0$. In particular,
$T^*(N(T^{\dag}))=0$. In addition, according to what has been proved
at the end of the previous paragraph, it is clear that $T^*(R(T))\subseteq R(T)$.
Therefore, accoding to Remark \ref{rema6}(iii), $R(T^*)\subseteq R(T)$.
\par

\indent Next suppose that  $N(T)\subseteq N(T^*)$. Since $N(T)=N(T^{\dag}T)=
R(I-T^{\dag}T)$,  $N(T)\subseteq N(T^*)$ is equivalent to
$T^*=T^*T^{\dag}T$. Now well, as in the proof of  statement (i), if $T=U+iV$
and $T^*=U-iV$, with $U$, $V\in  \mathcal{H}(L(X))$, adding and substracting $T$ and $T^*$ and using that $T=TT^{\dag}T$, 
it is then clear that $U=UT^{\dag}T$ and $V=VT^{\dag}T$. In particular, $UT^{\dag}T$,
$VT^{\dag}T\in  \mathcal{H}(L(X))$. However,
according again to \cite[Theorem 2.13]{Be1}, $UT^{\dag}T=T^{\dag}TU$
and $VT^{\dag}T=T^{\dag}TV$. Consequently 
$T=(U+iV)T^{\dag}T=T^{\dag}TT$.\par

\indent On the other hand, if $T=T^{\dag}TT$, applying an argument similar
to the one used in the proof of statement (i) but considering 
$T^{\dag}T$ instead of $TT^{\dag}$, it is then not difficult to prove that
$UT^{\dag}T=T^{\dag}TU$ and $VT^{\dag}T=T^{\dag}TV$. As a result,
since $T^{\dag}T$ is an idempotent such that $N(T^{\dag}T)=N(T)$, $N(T)$ is a closed invariant
subspaces both for $U$ and $V$.\par

\indent Consider $\tilde{U}=U\mid_{N(T)}^{N(T)}$, $\tilde{V}=V\mid_{N(T)}^{N(T)}\in L(N(T))$.
According again to \cite[Proposition 4.12]{Dw}, $\tilde{U}$, $\tilde{V}\in \mathcal{H}(L(N(T)))$.
However, since $T(N(T))=0$, according to  \cite[Hilfssatz 2(c)]{V}, $\tilde{U}=\tilde{V}=0$.
Consequently, $T^*(N(T))=0$, equivalently, $N(T)\subseteq N(T^*)$.\par

\indent The last statement is a consequence of \cite[Theorem 18(xii)]{Bo}.
\end{proof}

\indent Next normal Moore-Penrose invertible Banach algebra elements will be characterized.
Compare with  \cite[Proposition 27]{Ha} and \cite[Lemma 1.2]{MD} where normal Moore-Penrose invertible elements of rings with involution
were considered.\par

\begin{thm}\label{thm14} Let $A$ be a unital Banach algebra and consider $a\in \mathcal{V}(A)$
such that $a^{\dag}$ exists. Then, necessary and sufficient for $a$ to be normal is the fact that
$a$ is EP and $a^{\dag}a^*=a^*a^{\dag}$.
\end{thm}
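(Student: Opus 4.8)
The plan is to reduce the statement to the multiplication operator $T = L_a \in L(A)$ and then run a short algebraic argument built on Theorem \ref{thm13}. First I would record the dictionary supplied by the preliminary remarks: by Remark \ref{rema11}(ii), $a$ is normal iff $L_a$ is normal and $(L_a)^* = L_{a^*}$; by Remark \ref{rema6}(ii), $(L_a)^{\dag} = L_{a^{\dag}}$ exists; by Remark \ref{rema7}(ii), $a$ is EP iff $L_a$ is EP; and since $L$ is injective, $a^{\dag}a^* = a^*a^{\dag}$ holds iff $(L_a)^{\dag}(L_a)^* = (L_a)^*(L_a)^{\dag}$. Moreover $L_a \in \mathcal{V}(L(A))$. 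Hence it suffices to prove the statement for a bounded operator $T \in \mathcal{V}(L(X))$ with $T^{\dag}$ existing: $T$ is normal iff $T$ is EP and $T^{\dag}T^* = T^*T^{\dag}$.

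The crucial preparatory fact is the EP characterization of Theorem \ref{thm13}: $T$ is EP iff $R(T^*) \subseteq R(T)$ and $N(T) \subseteq N(T^*)$. For an EP operator set $p = TT^{\dag} = T^{\dag}T$; this is an idempotent with $R(p) = R(T)$ and $N(p) = N(T)$. Since $p$ acts as the identity on its range, $R(T^*) \subseteq R(p)$ forces $pT^* = T^*$, while $N(p) \subseteq N(T^*)$ (i.e. $T^*$ annihilates $R(I-p) = N(p)$) forces $T^*p = T^*$. These two relations, together with $TT^{\dag} = T^{\dag}T = p$, are the only structural inputs needed.

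For the forward implication I would argue as follows. If $T$ is normal then, $T^{\dag}$ existing, $R(T)$ is closed (it is the range of the idempotent $TT^{\dag}$ of Remark \ref{rema6}(iii)), so Remark \ref{rema11}(iii) yields $N(T^*) = N(T)$, the existence of $T^{\sharp}$, and $R(T^*) \subseteq R(T)$. Thus $R(T^*) \subseteq R(T)$ and $N(T) \subseteq N(T^*)$, whence Theorem \ref{thm13} gives that $T$ is EP. To obtain the commuting relation I start from normality $TT^* = T^*T$ and multiply on both sides by $T^{\dag}$, on the left and on the right:
$$
T^{\dag}(TT^*)T^{\dag} = T^{\dag}(T^*T)T^{\dag}.
$$
Using $T^{\dag}T = TT^{\dag} = p$, the left side equals $pT^*T^{\dag}$ and the right side equals $T^{\dag}T^*p$; then $pT^* = T^*$ and $T^*p = T^*$ collapse these to $T^*T^{\dag}$ and $T^{\dag}T^*$ respectively, so $T^{\dag}T^* = T^*T^{\dag}$.

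The reverse implication is the mirror computation. Assuming $T$ is EP and $T^{\dag}T^* = T^*T^{\dag}$, I multiply the latter on both sides by $T$, on the left and on the right:
$$
T(T^{\dag}T^*)T = T(T^*T^{\dag})T.
$$
Now $TT^{\dag} = T^{\dag}T = p$ turns the left side into $pT^*T$ and the right side into $TT^*p$, and the relations $pT^* = T^*$, $T^*p = T^*$ (valid since $T$ is EP) give $T^*T = TT^*$, i.e. $T$ is normal. Transporting both implications back through the dictionary $T = L_a$ proves the theorem. I expect the only genuine obstacle to be the need for the two idempotent relations $pT^* = T^*$ and $T^*p = T^*$: unlike the $C^*$-algebra case of Theorem \ref{thm19}, there is no involution to produce them directly, and they must be supplied precisely by the EP characterization of Theorem \ref{thm13}, which is exactly why that theorem is established first.
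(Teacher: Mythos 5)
Your proof is correct, and while it keeps the paper's overall framework---reduce to $L_a$ via Remarks \ref{rema6}(ii), \ref{rema7}(ii) and \ref{rema11}(ii), and lean on Theorem \ref{thm13}---both of its central computations genuinely differ from the paper's. For the forward implication the paper, after deriving EP exactly as you do (Remark \ref{rema11}(iii) plus Theorem \ref{thm13}), obtains $a^{\dag}a^*=a^*a^{\dag}$ by citing the Albrecht--Spain theorem \cite{AS}; you instead get it by sandwiching $TT^*=T^*T$ between two copies of $T^{\dag}$ and collapsing with the identities $TT^{\dag}T^*=T^*$ and $T^*T^{\dag}T=T^*$, which are precisely the identity forms of the inclusions $R(T^*)\subseteq R(T)$ and $N(T)\subseteq N(T^*)$ that the paper itself records inside the proof of Theorem \ref{thm13}. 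For the converse the paper argues geometrically: it shows that the range of the commutator $L_a(L_a)^*-(L_a)^*L_a$ lies in $R(L_a)$ and also, via the decomposition $A=R((L_a)^{\dag})\oplus N(L_a)$ and the hypothesis, in $N((L_a)^{\dag})$, and then concludes from $R(L_a)\cap N((L_a)^{\dag})=0$; you instead sandwich $T^{\dag}T^*=T^*T^{\dag}$ between two copies of $T$ and collapse with the same two identities. Your route buys self-containedness (no appeal to \cite{AS}) and a pleasant symmetry---the two directions become mirror-image one-line computations---at the modest cost of having to note explicitly that, once $T$ is EP, the idempotent $p=TT^{\dag}=T^{\dag}T$ satisfies $R(p)=R(T)$ and $N(p)=N(T)$ (Remark \ref{rema6}(iii)), which is exactly what converts the two inclusions into the operator identities $pT^*=T^*=T^*p$; the paper's version keeps the range/null-space content of the hypotheses visible, at the price of an external citation and a longer decomposition argument. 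Both proofs rest equally on Theorem \ref{thm13}, so your observation that this is the indispensable ingredient is on target.
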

\begin{proof} First of all suppose that $a$ is normal. According to Remark \ref{rema6}(ii),
$L_a\in L(A)$ has a Moore-Penrose inverse, what is more $(L_a)^{\dag}=L_{a^\dag}$,  and according to
Remark  \ref{rema11}(ii), $L_a\in \mathcal{V}(L(A))$ and $L_a$ is normal. 
However, according to  Remark  \ref{rema11}(iii) and Theorem  \ref{thm13},
$L_a$ is an EP operator, which, according to Remark \ref{rema7}(ii), is equivalent to the fact
that $a$ is EP. Furthermore, according to \cite[Theorem]{AS}, $a^{\dag}a^*=a^*a^{\dag}$.\par

\indent On the other hand, suppose that $a$ is EP and  $a^{\dag}a^*=a^*a^{\dag}$.
Since, according to Remark \ref{rema11}(ii) and Remark  \ref{rema6}(ii),  $(L_a)^*=L_{a^*}$ and $(L_a)^{\dag}
=L_{a^\dag}$ respectively, according to Remark  \ref{rema7}(ii), it is clear that
$L_a\in L(A)$ is EP and $(L_a)^{\dag}(L_a)^*=(L_a)^*(L_a)^{\dag}$. It will be proved that
$L_a(L_a)^*=(L_a)^*L_a$, which, according to Remark \ref{rema11}(i)-(ii), is equivalent to the fact that
$a$ is normal.\par
\indent Since $L_a$ is EP, according to  \cite[Theorem18(xii)]{Bo} and Theorem \ref{thm13}(i), 
$R((L_a)^*)\subseteq R(L_a)$. In particular, $R(L_a(L_a)^*-(L_a)^*L_a)\subseteq R(L_a)$.
According again to \cite[Theorem18(xii)]{Bo} and Theorem \ref{thm13}(ii), 
$(L_a(L_a)^*-(L_a)^*L_a)(N(L_a))=0$. Moreover, since
$(L_a)^{\dag}(L_a(L_a)^*-(L_a)^*L_a)(L_a)^{\dag}=0$,
$(L_a(L_a)^*-(L_a)^*L_a)(R(L_a)^{\dag})\subseteq N((L_a)^{\dag})$.
Thus, according to Remark \ref{rema6}(iii), $R(L_a(L_a)^*-(L_a)^*L_a)
\subseteq N((L_a)^{\dag})$. Therefore, since according to Remark \ref{rema6}(iii) $R(L_a)\cap N((L_a)^{\dag})=0$,
$L_a(L_a)^*-(L_a)^*L_a=0$.
\end{proof}
\indent In the following theorem results of \cite[Theorem 3.2]{DK} will be extended to normal Moore-Penrose invertible 
Banach algebra elements, see also  \cite[Theorems 2 and 6]{BG} and \cite[Theorem 2.2]{MD}.   \par

\begin{thm}\label{thm15}Let $A$ be a unital Banach algebra and consider $a\in \mathcal{V}(A)$ such that 
$a^{\dag}$ exists. Then, necessary and sufficient for $a$ to be  normal is the fact that $a^{\sharp}$ exists and 
one of the following conditions holds.
\begin{align*} 
&(i)& &aa^*a^{\sharp}=a^*a^{\sharp}a \hbox{ and } a=a^{\dag}aa,& &(ii)& 
&aa^*a^{\sharp}= a^{\sharp}aa^* \hbox{ and } a=aaa^{\dag},&\\
&(iii)& &aa^{\sharp}a^*=a^{\sharp}a^* a  \hbox{ and } a=aaa^{\dag},&
 &(iv)& &a^*aa^{\sharp}=a^{\sharp}a^*a \hbox{ and } a=a^{\dag}aa,&\\
&(v)& &a^{\dag}a^*a^{\sharp}=a^{\dag}a^{\sharp}a^* \hbox{ and } a=aaa^{\dag },&
&(vi)&&a^*a^{\sharp}a^{\dag }=a^{\sharp }a^*a^{\dag }\hbox{ and } a=a^{\dag }aa,&\\
&(vii)& &a^*=aa^*a^{\sharp },& &(viii)& &a^*=a^{\sharp }a^*a,&\\
&(ix)& &aa^*a=a^*aa \hbox{ and } a=a^{\dag}aa,& &(x)& 
&aaa^*= aa^*a \hbox{ and } a=aaa^{\dag}.&\\
\end{align*} 
\end{thm}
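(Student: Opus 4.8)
The plan is to reduce the whole statement to the Banach space operator case and then to an elementary computation with the projection $TT^{\sharp}=T^{\sharp}T$. The ``only if'' direction is routine: if $a$ is normal then by Theorem \ref{thm14} it is EP, so $a^{\sharp}$ exists and equals $a^{\dag}$ (Remark \ref{rema5}(ii)), while $aa^{*}=a^{*}a$, $aa^{\dag}=a^{\dag}a$ and $a^{\dag}a^{*}=a^{*}a^{\dag}$; substituting $a^{\sharp}=a^{\dag}$ and using these three commutations, each of (i)--(x) follows by direct manipulation.

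For the substantive ``if'' direction I would first pass to $T:=L_{a}\in L(A)$. Since $a\in\mathcal V(A)$, Remark \ref{rema11}(ii) gives $T\in\mathcal V(L(A))$ with $T^{*}=L_{a^{*}}$, while Remark \ref{rema6}(ii) and Remark \ref{rema5}(i) give $T^{\dag}=L_{a^{\dag}}$ and $T^{\sharp}=L_{a^{\sharp}}$; as $L$ is an injective homomorphism, each relation in (i)--(x) transfers verbatim to $T$, and $a$ is normal iff $T$ is (Remark \ref{rema11}(i)--(ii)). So it suffices to prove, for $T\in\mathcal V(L(X))$ with $T^{\dag}$ and $T^{\sharp}$ existing, that each condition forces $v:=TT^{*}-T^{*}T=0$. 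Throughout I would use that $P:=TT^{\sharp}=T^{\sharp}T$ is the projection onto $R(T)$ along $N(T)$, that $R(T^{\sharp})=R(T)$, $N(T^{\sharp})=N(T)$ (Remark \ref{rema5}(iv)), and that $X=R(T)\oplus N(T)=R(T)\oplus N(T^{\dag})=R(T^{\dag})\oplus N(T)$ (Remark \ref{rema5}(iii), Remark \ref{rema6}(iii)). Via Theorem \ref{thm13} the ``EP half'' of each condition translates into a one-sided annihilation of $T^{*}$: $a=a^{\dag}aa$ means $N(T)\subseteq N(T^{*})$, equivalently $T^{*}=T^{*}P$, while $a=aaa^{\dag}$ means $R(T^{*})\subseteq R(T)$, equivalently $T^{*}=PT^{*}$.

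The two single-equation cases are handled directly. If (vii) holds, i.e. $T^{*}=TT^{*}T^{\sharp}$, then for $x\in N(T)=N(T^{\sharp})$ we get $T^{*}x=0$, so $N(T)\subseteq N(T^{*})$ and $T^{*}=T^{*}P$; multiplying the identity on the right by $T$ gives $T^{*}T=TT^{*}P$, whence $v=TT^{*}(I-P)=T\bigl(T^{*}(I-P)\bigr)=0$. Case (viii) is the left--right dual. Cases (ix) and (x) are equally short: if $aa^{*}a=a^{*}aa$ and $a=a^{\dag}aa$, then $vT=0$ makes $v$ vanish on $R(T)$ while $N(T)\subseteq N(T^{*})$ forces $v$ to vanish on $N(T)$, so $v=0$ on $X=R(T)\oplus N(T)$; symmetrically (x) gives $Tv=0$, hence $R(v)\subseteq N(T)$, together with $R(v)\subseteq R(T)+R(T^{*})\subseteq R(T)$, so $R(v)\subseteq R(T)\cap N(T)=0$.

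The remaining six cases I would reduce to (vii) or (viii) by multiplying the normality equation by $T$ and simplifying with $PT=T$, $R(T^{\sharp})=R(T)$ and the appropriate half $T^{*}=T^{*}P$ or $T^{*}=PT^{*}$. For instance, in (i) the right-hand side $T^{*}T^{\sharp}T=T^{*}P$ equals $T^{*}$, so (i) collapses to (vii); in (iv) the left-hand side $T^{*}TT^{\sharp}=T^{*}P=T^{*}$, so (iv) collapses to (viii); (ii) and (iii) are analogous using $T^{*}=PT^{*}$. The genuinely harder cases are (v) and (vi), where $T^{\dag}$ sits inside the normality equation and cannot be cancelled by a single multiplication. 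Here I would first establish the commutation $T^{*}T^{\sharp}=T^{\sharp}T^{*}$: for (v), $T^{\dag}(T^{*}T^{\sharp}-T^{\sharp}T^{*})=0$ puts $R(T^{*}T^{\sharp}-T^{\sharp}T^{*})\subseteq N(T^{\dag})$, while $R(T^{*})\subseteq R(T)$ and $R(T^{\sharp})=R(T)$ put it inside $R(T)$, and $R(T)\cap N(T^{\dag})=0$ finishes it; for (vi), dually, $(T^{*}T^{\sharp}-T^{\sharp}T^{*})T^{\dag}=0$ makes the difference vanish on $R(T^{\dag})$, it also vanishes on $N(T)$ since $N(T)=N(T^{\sharp})\subseteq N(T^{*})$, and $X=R(T^{\dag})\oplus N(T)$ gives $0$. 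Once $T^{*}T^{\sharp}=T^{\sharp}T^{*}$ is known, multiplying by $T$ and using the relevant half reduces (v) to (vii) and (vi) to (viii). I expect (v) and (vi) to be the main obstacle, precisely because eliminating the interior $T^{\dag}$ requires invoking the finer decompositions $X=R(T)\oplus N(T^{\dag})$ and $X=R(T^{\dag})\oplus N(T)$ rather than the single splitting $X=R(T)\oplus N(T)$ that suffices everywhere else. Finally, since each condition forces $v=0$, that is $T$ normal, and $a$ is normal iff $T$ is, the proof is complete.
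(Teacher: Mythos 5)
Your proof of the substantive ``if'' direction is correct, but it is organized along genuinely different lines than the paper's. The paper argues case by case: (i), (iii), (ix), (x) by range/kernel splittings; (ii), (v), (vi) by first extracting $T^{*}T^{\sharp}=T^{\sharp}T^{*}$ and then passing through Theorem \ref{thm8} (to get EP) and Theorem \ref{thm14} (to get normality); and (vii), (viii) by first proving $T$ is EP via Theorem \ref{thm13} and then verifying the commutation $T^{*}T^{\dag}=T^{\dag}T^{*}$ demanded by Theorem \ref{thm14}. You instead promote (vii) and (viii) to master identities and settle them by a direct two-line computation: $T^{*}=T^{*}P$, $T^{*}T=TT^{*}P$, hence $TT^{*}-T^{*}T=TT^{*}(I-P)=T\bigl(T^{*}(I-P)\bigr)=0$, with the left--right dual for (viii); then (i)--(iv) collapse onto (vii)/(viii) by a single multiplication, using the Theorem \ref{thm13} translations $a=a^{\dag}aa\Leftrightarrow T^{*}=T^{*}P$ and $a=aaa^{\dag}\Leftrightarrow T^{*}=PT^{*}$, while for (v), (vi) you first obtain $T^{*}T^{\sharp}=T^{\sharp}T^{*}$ exactly as the paper does, via the decompositions $X=R(T)\oplus N(T^{\dag})$ and $X=R(T^{\dag})\oplus N(T)$. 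I checked all ten reductions and they hold. Your route buys economy and transparency: Theorem \ref{thm8} is never needed, Theorem \ref{thm14} is needed only in the trivial direction, and normality comes out of one explicit computation. What the paper's longer route buys is the explicit intermediate fact, stressed in Remark \ref{rema16}, that each condition forces $a$ to be EP with $a^{\dag}a^{*}=a^{*}a^{\dag}$.

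The one place where you are too quick is the ``only if'' direction. With $a^{\sharp}=a^{\dag}$, the three commutations $aa^{*}=a^{*}a$, $aa^{\dag}=a^{\dag}a$, $a^{\dag}a^{*}=a^{*}a^{\dag}$ do yield the balanced identities in (i)--(vi), (ix), (x) by pure manipulation, but they do \emph{not} formally yield (vii) or (viii): manipulation only reduces these to the absorption identities $aa^{\dag}a^{*}=a^{*}$ and $a^{*}a^{\dag}a=a^{*}$ (i.e.\ $PT^{*}=T^{*}$ and $T^{*}P=T^{*}$), which encode $R(T^{*})\subseteq R(T)$ and $N(T)\subseteq N(T^{*})$ and are not consequences of commutation relations alone (in an abstract ring, an element commuting with $a$ and $a^{\dag}$ need not be absorbed by the idempotent $aa^{\dag}$). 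They do follow here because $a$ is EP, so $a=aaa^{\dag}=a^{\dag}aa$, and Theorem \ref{thm13} --- the very translation you invoke in the converse --- converts these into $T^{*}=PT^{*}=T^{*}P$; the paper cites \cite[Theorem 18(xii)]{Bo} at this point for the same purpose. So the omission is easily repaired with tools you already use, but as written ``direct manipulation'' is not sufficient for (vii) and (viii).
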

\begin{proof}According to Remark \ref{rema11}(ii), if $a\in \mathcal{V}(A)$ is normal, then $L_a\in L(A)$ is normal. 
In addition, since $a^{\dag}$ exists, according to Remark \ref{rema6}(ii), $(L_a)^{\dag}$ exists,
in particular $R(L_a)$ is closed. Consequently, according to Remark  \ref{rema11}(iii), $(L_a)^{\sharp}$
exists, which, according to Remark \ref{rema5}(i), implies that $a^{\sharp}$ exists. Moreover, since $a$ is a normal Moore-Penrose invertible 
element, according to Remark \ref{rema5}(ii), Theorem \ref{thm14} and \cite[Theorem 18(xii)]{Bo}, all the statements hold.\par

\indent To prove the converse implications, first of all the case $A=L(X)$, $X$ a Banach space, and $T\in L(X)$ 
such that $T^{\dag }$ and $T^{\sharp }$ exist will be considered.\par

\indent Suppose that  $TT^*T^{\sharp}=T^*T^{\sharp}T$ and  $T=T^{\dag}TT$. Then, according to Remark \ref{rema5}(iv), 
$(TT^*-T^*T)(R(T))=0$, and according to Theorem \ref{thm13}(ii), $(TT^*-T^*T)(N(T))=0$. However, according to Remark \ref{rema5}(iii),
$T$ is normal.\par

\indent If statement (ii) holds, then $R(T^*T^{\sharp}-T^{\sharp}T^*)\subseteq N(T)$. In addition,
according to Theorem  \ref{thm13}(i), $R(T^*)\subseteq R(T)$. Then, 
$R(T^*T^{\sharp}-T^{\sharp}T^*)\subseteq R(T)$. 
Consequently, according to Remark \ref{rema5}(iii), $T^*T^{\sharp}=T^{\sharp}T^*$. However, according to
Theorem  \ref{thm8}(x), $T$ is EP. Then, according to Remark \ref{rema5}(ii) and Theorem  \ref{thm14}, $T$ is normal.\par

\indent If $TT^{\sharp}T^*=T^{\sharp}T^* T$ and $T=TTT^{\dag}$, then,
according to Remark \ref{rema5}(iv),
$R(TT^*-T^*T)\subseteq N(T^{\sharp})=N(T)$. In addition, according to 
Theorem  \ref{thm13}(i), $R(T^*)\subseteq R(T)$. Then $R(TT^*-T^*T)\subseteq R(T)$.
However, according to Remark \ref{rema5}(iii), $T$ is normal.\par

\indent Next suppose that statement (iv) holds. According to  
Remark \ref{rema5}(iii)-(iv), $R(T^*T)\subseteq R(T)$.
In addition, note that since
$TT^{\sharp}(T^*T)= (TT^*)TT^{\sharp}$, 
$$
TT^{\sharp}(T^*T)TT^{\sharp}= TT^{\sharp}(TT^*)TT^{\sharp}.
$$
However, according to Theorem  \ref{thm13}(ii), $N(T)\subseteq N(T^*)$. Therefore,
according to Remark \ref{rema5}(iii), $TT^*=T^*T$.\par

\indent Concerning statement (v), it can be proved that $T$ is normal as in the case of statement (ii) using that $N(T^{\dag})\cap R(T)=0$
(Remark \ref{rema6}(iii)).\par

\indent Suppose that statement (vi) holds. Then $R(T^{\dag })\subseteq N(T^*T^{\sharp} -T^{\sharp }T^*)$.
Moreover, according to Remark \ref{rema5}(iv) and Theorem  \ref{thm13}(ii), $N(T)\subseteq N(T^*T^{\sharp} -T^{\sharp }T^*)$. However,
according to Remark \ref{rema6}(iii), $T^*T^{\sharp} =T^{\sharp }T^*$. Now proceed as in statement (ii).\par

\indent If statement (vii) holds, then according to Remark \ref{rema5}(iv), $N(T)\subseteq N(T^*)$.
Clearly, $R(T^*)\subseteq R(T)$. Therefore, according to Theorem  \ref{thm13}, $T$ is EP. 
In particular, according to Remark \ref{rema5}(ii), $T^{\sharp }=T^{\dag}$.
To prove that $T$ is normal, according to Theorem  \ref{thm14}, it is enough to show that $T^*T^{\dag}=T^{\dag}T^*$. \par
\indent Note that  according to Remark  \ref{rema7}(iii),
$R(T^*T^{\dag}-T^{\dag}T^*)\subseteq R(T)$. In addition, according to the proof of Theorem  \ref{thm13}(i),
since $R(T^*)\subseteq R(T)$, $T^*=TT^{\dag}T^*$. Now well, 
$$
T(T^*T^{\dag}-T^{\dag}T^*)= TT^*T^{\dag} -TT^{\dag}T^*=T^* -TT^{\dag}T^*=0.
$$
Therefore, $R(T^*T^{\dag}-T^{\dag}T^*)\subseteq N(T)$. According to Remark  \ref{rema5}(iii),
$T^*T^{\dag}=T^{\dag}T^*$.\par

\indent Suppose that statement (viii) holds. Then, it  can be proved that $T$ is normal as in the case of statement (vii) using 
$N(T^{\dag})\cap R(T)=0$ (Remark  \ref{rema6}(iii)).\par
\indent If statement (ix) holds, then it can be proved that $T$ is normal as in the case of statement (i) using $T$ instead of 
$T^{\sharp}$.\par

\indent Concerning statement (x), it can be proved that $T$ is normal as in the case of statement (iii) using $T$ instead of $T^{\sharp}$.\par

\indent Next consider an arbitrary Banach algebra $A$ and $a\in A$ satisfying the hypothesis
of the Theorem. According to Remark  \ref{rema6}(ii) and Remark  \ref{rema5}(i),
$L_a\in L(A)$ satisfies the hypotesis of the Theorem. What is more, according to Remark  \ref{rema5}(i), 
 Remark \ref{rema6}(ii) and Remark  \ref{rema11}(ii), 
 if $a$ satisfies one of the
statements of the Theorem, then $L_a$ satisfies the same statement. Consequently, according
to what has been proved, $L_a\in L(A)$ is normal. Then, according to Remark  \ref{rema11}(ii), $a$ is normal. 
\end{proof}

\begin{rema}\label{rema16}\rm Given $A$ a $C^*$-algebra, it is well known that
if $a$ and $x\in A$, then $a^*ax=0$ implies that $ax=0$. Similarly, $xaa^*=0$
implies $xa=0$. More generally, if $\mathcal{R}$ is a ring with involution,
an element $a\in \mathcal{R}$ that satisfies this property is said to be
\it $*$-cancellable\rm, see for example  \cite[Definition 5.2]{KP} and \cite[Definition 1.1]{MD}.
In particular, if $a\in  \mathcal{R}$ is Moore-Penrose invertible, see \cite{KP, MD},
then according to \cite[Theorem 5.3]{KP}, $a$ is $*$-cancellable. However, in the 
context of the present article, not only $^*\colon \mathcal{V}(A)\to
\mathcal{V}(A)$ is not in general an involution, but also it is not clear if the cancellation 
property holds for Moore-Penrose invertible elements of $\mathcal{V}(A)$, see the proof of
\cite[Theorem 5.3]{KP}. The conjecture is that an $a\in \mathcal{V}(A)$
such that $a^{\dag}$  exists is not in general $*$-cancellable. \par

\indent Anyway, concerning equivalent statements characterizing the condition of
being normal, in the frame of a $C^*$-algebras or more generally in a ring with involution
$ \mathcal{R}$, the cancellation property and the identity $(ab)^*=b^*a^*$,
$a$ and $b\in  \mathcal{R}$, are two important properties, see the proof of
\cite[Theorem 2.2]{MD}. Since in the conditions of Theorem  \ref{thm15} these two 
properties fail to hold, in most statements an additional condition needs to be 
considered. Note that in these cases, according to Theorem  \ref{thm9}, 
an element $a$ that satisfies the conditions of Theorem  \ref{thm15} is EP.
Compare with Theorem  \ref{thm14}.
\end{rema}
\vskip.5truecm
\noindent {\bf Acknowledgments} The authors wish to express their indebtedness
to the referees, for their observations and suggestions
considerably improved the final version of the present article.\par

\vskip.5truecm

\noindent Enrico Boasso\par
\noindent E-mail address: enrico\_odisseo@yahoo.it

\vskip.3truecm
\noindent Vladimir Rako\v cevi\'c\par
\noindent University of Ni\v s - Faculty of Sciences and Mathematics \par
\noindent Visegradska 33 - 18000, Ni\v s - Serbia\par
\noindent E-mail address:


\begin{thebibliography}{99}

\bibitem{AS} E. Albrecht and P. G. Spain,  \emph{When products of selfadjoint are normal},
Proc. Amer. Math. Soc. 128 (2000) 2509-2511.

\bibitem{BG} O. M. Baksalary and G. Trenkler, \emph{Characterizations of EP, normal,
and hermitian matri-
ces}, Linear Multilinear Algebra 56  (2008) 299-304.

\bibitem{B} J. Ben\'{\i}tez,  \emph{Moore-Penrose inverses and commuting elements
of $C^*$-algebras}, J. Math. Anal. Appl. 345 (2008) 766-770.

\bibitem{Be} E. Berkson,  \emph{Some characterizations of $C^*$-algebras},
Illinois J. Math. 10 (1966) 1-8.

\bibitem{Be1} E. Berkson,  \emph{Hermitian projections and orthogonality in Banach spaces},
Proc. London Math. Soc. (3) 24 (1972) 101-118.

\bibitem{Bo} E. Boasso, \emph{On the Moore-Penrose inverse, EP
Banach space operators, and EP Banach algebra elements}, J. Math. Anal. Appl.
339 (2008) 1003-1014.

\bibitem{BD} F. F. Bonsall and J. Duncan, \emph{Complete normed algebras}, 
Springer Verlag, Berlin-Heidelberg-New York, 1973.

\bibitem{Br} K. G. Brock, \emph{A note on commutativity of a linear operator
and its Moore-Penrose inverse}, Numer. Funct. Anal. Optim. 11 (1990) 673-680.

\bibitem{CT} S. Cheng and Y. Tian,  \emph{Two sets of new characterizations for normal and EP matrices},
Linear Algebra Appl. 375 (2003) 181-195.

\bibitem{D} D. S. Djordjevi\' c,  \emph{Characterizations of normal, hyponormal and EP 
operators}, J. Math. Anal. Appl. 329 (2007) 1181-1190.

\bibitem{DK} D. S. Djordjevi\' c and J. J. Koliha, \emph{Characterizing hermitian, normal and EP
operators}, Filomat 21  (2007) 39-54.

\bibitem{DKS} D. S. Djordjevi\' c, J. J. Koliha and I. Straskraba, \emph{Factorization
of EP elements in $C^*$-algebras}, Linear Multilinear Algebra 57 (2009) 587-594.

\bibitem{Dw} H. R. Dowson, \emph{Spectral theory of linear operators}, Academic Press
Inc. London-New York-San Francisco, 1978.

\bibitem{DGS} H. R. Dowson, T. A. Gillespie and P. G. Spain, \emph{A commutativity theorem for Hermitian operators}, 
Math. Ann. 220 (1976) 215-217.

\bibitem{DY} D. Drivarialis and N. Yannakakis, \emph{Subspaces with a common complement in a Banach space},
Studia Math. 182 (2007) 141-164.

\bibitem{DKP} D. Drivarialis, S. Karanasios and D. Pappas,  \emph{Factorizations 
of EP operators}, Linear Algebra Appl. 429 (2008) 1555-1567.

\bibitem{F} C. K. Fong, \emph{Normal operators on Banach spaces},  
Glasgow Math. J. 20 (1979) 163-168.

\bibitem{G} B. W. Glickfeld, \emph{A metric characterization of $C(X)$ and its generalization to $C^*$-algebras},
Illinois J. Math. 10 (1966) 547-556.

\bibitem{HM1} R. Harte and M. Mbekhta, \emph{On generalized inverses in
$C^*$-algebras}, Studia Math. 103 (1992) 71-77.

\bibitem{HM} R. Harte and M. Mbekhta, \emph{On generalized inverses in
$C^*$-algebras II}, Studia Math. 106 (1993) 129-138.

\bibitem{Ha} R. E. Hartwig, \emph{Block generalized inverses}, Arch. Rational
Mech. Anal. 61 (1976) 197-251. 

\bibitem{Ki} C. King, \emph{A note on Drazin inverses}, Pacific J. Math. 70 (1977)
383-390.\par

\bibitem{Kl} D. C. Kleinecke, \emph{On operators commutators}, Proc. Amer. Math. Soc. 8  (1957) 535-536.

\bibitem{K} J. J. Koliha, \emph{Elements of $C^*$-algebras commuting with
their Moore-Penrose inverse}, Studia Math. 139  (2000) 81-90.

\bibitem{KP} J. J. Koliha and P. Patr\'{\i}cio,   \emph{Elements of rings with equal spectral idempotents}, 
J. Aust. Math. Soc. 72 (2002) 137-152.

\bibitem{MD} D. Mosi\' c and D. S. Djordjevi\' c, \emph{Moore-Penrose-invertible normal and Hermitian
elements in rings}, Linear Algebra Appl. 431 (2009) 732-745. 

\bibitem{MD0} D. Mosi\' c, D. S. Djordjevi\' c and J. J. Koliha, \emph{EP elements in rings}, Linear Algebra Appl. 431 (2009)
527-535.

\bibitem{Pe} R. Penrose, \emph{A generalized inverse for matrices},
Proc. Cambridge Philos. Soc. 51 (1955) 406-413.

\bibitem{R1} V. Rako\v cevi\' c, \emph{Moore-Penrose inverse in Banach algebras}, 
 Proc. Roy. Irish Acad. Sect. A 88 (1988) 57-60.

\bibitem{R2} V. Rako\v cevi\' c, \emph{On the continuity of the Moore-Penrose inverse in Banach algebras}, 
Facta Univ. Ser. Math. Inform. 6 (1991) 133-138.

\bibitem{R3} V. Rako\v cevi\' c, \emph{On Harte's theorem for regular boundary elements}, 
Filomat 9  (1995) 899-910.

\bibitem{S} P. G. Spain, \emph{Unilaterally invertible normals are invertible}, Q. J. Math. 52  (2001) 
229-230. 

\bibitem{T} A. E. Taylor, \emph{Theorems on ascent, descent, nullity and defect of linear operators}, Math. Ann. 163 (1966) 18-49.

\bibitem{V} I. Vidav, \emph{Eine metrische Kennzeichnung der selbstadjungierten
Operatoren}, Math. Z. 66 (1956) 121-128.
  
\end{thebibliography}
\end{document}